\documentclass[letterpaper,11pt]{article}
\usepackage{graphicx}
\usepackage{amsmath, mathrsfs}
\usepackage{amssymb}
\usepackage{amsthm}
\usepackage{cite}
\usepackage{xcolor}
\usepackage{soul}
\usepackage[all]{xy}
\usepackage{blkarray} 
\usepackage{hyperref}
\hypersetup{
    colorlinks=true,
    linkcolor=blue,
    filecolor=magenta,
    urlcolor=cyan,
    citecolor=cyan,
    pdftitle={Overleaf Example},
    pdfpagemode=FullScreen,
}

\newcommand{\bs}[1]{{\boldsymbol{#1}}}
\newcommand{\gr}[1]{( #1 )}
\newcommand{\Gr}[1]{\big( #1 \big)}
\newcommand{\GR}[1]{\Big( #1 \Big)}

\newcommand{\AutoGroup}[1]{\left( #1 \right)}
\newcommand{\abs}[1]{\vert #1 \vert}
\newcommand{\Abs}[1]{\big\vert #1 \big\vert}

\newcommand{\set}[1]{\{ #1 \}}
\newcommand{\Set}[1]{\big\{ #1 \big\}}

\newcommand{\vect}[1]{[ #1 ]}
\newcommand{\Vect}[1]{\big[ #1 \big]}

\newcommand{\Real}{\mathbb R}
\newcommand{\C}{\mathbb C}
\newcommand{\Int}{\mathbb Z}
\newcommand{\e}{\mathrm{e}} 
\newcommand{\F}{\mathcal{F}}
\newcommand{\conv}{\ast}
\newcommand{\Cbasis}{\bs{P}} 

\newcommand{\dimin}[2]{{ #1 }^{\gr{#2}}}
\newcommand{\bcode}[1]{\texttt{#1}}
\newcommand{\norm}[1]{\Vert #1 \Vert}
\newcommand{\Norm}[1]{\big\Vert #1 \big\Vert}
\newcommand{\ip}[2]{\langle #1, #2 \rangle}

\newcommand{\dotprod}[2]{\langle\!\langle #1, #2 \rangle\!\rangle}
\newcommand{\Dotprod}[2]{\big\langle\!\:\!\!\big\langle #1, #2 \big\rangle\!\:\!\!\big\rangle}

\newcommand{\alertblue}[1]{\textcolor{blue}{#1}}

\newcommand{\hatbsf}{{\kern+0.25em}\hat{{\kern-0.25em}\bs{f}}}
\newcommand{\imag}{\mathfrak{i}}

\newcommand{\ith}{$i$th}
\newcommand{\jth}{$j$th}
\newcommand{\kth}{$k$th}

\newcommand{\ellth}{$\ell$th}
\newcommand{\mth}{$m$th}

\newcommand{\qth}{$q$th}

\newcommand{\ijth}{$(i,j)$th}
\newcommand{\klth}{$(k,l)$th}

\newtheorem{thm}{Theorem}[section]

\theoremstyle{definition} \newtheorem{remark}{Remark}
\newtheorem{definition}{Definition}[section]

\begin{document}

\title{Statistics of a multi-factor function\\ from its Fourier transform}

\author{Matthew A. Herman\thanks{Corresponding author. The final parts of this paper were completed while MAH was at the Ogbunu Lab, Yale University.}\qquad \qquad\quad Stephen Doro \\ \;\;\,\normalsize\texttt{matthew.herman@yale.edu} \qquad\; \texttt{sd15@columbia.edu} \;}

\date{\today}

\maketitle

\begin{abstract}
For a phenomenon~$\bs{f}$ that is a function of~$n$ factors, defined on a finite abelian group $G$, we derive its population statistics solely from its Fourier transform~$\hatbsf$. Our main result is an \emph{$m$-Coefficient/Index Annihilation Theorem}: the \mth\ moment of~$\bs{f}$ becomes a series of terms, each with precisely $m$ Fourier coefficients --- and surprisingly, the coefficient \emph{indices} in each term sum to zero under group addition. This condition acts like a filter, limiting which terms appear in the Fourier domain, and can reveal deeper relationships between the variables driving~$\bs{f}$. These techniques can also be used as an analytical/design tool, or as a feasibility constraint in search algorithms. For functions defined on $\Int_2^n$, we show how the skew, kurtosis, etc.~of a binomial distribution can be derived from the Fourier domain. Several other examples are presented.
\end{abstract}

\textbf{Keywords:} multidimensional Fourier transform, high-order statistics, cumulants, polyspectra, autoconvolution, sparsity, graph theory, hypergraphs

\section{Introduction}
The classic ``bell-shaped curve'' is a profound and central tool in statistics, but it is widely applied to data which violate, even if slightly, the assumptions upon which it is founded. This paper is an attempt to understand data distributions when nonlinearities are present but small, sparse, and low-degree.

The Gaussian distribution amalgamates the net effect of multiple factors, when each is small and combine linearly. Its use was familiar in physics as a way to composite error terms when Francis Galton introduced the Gaussian distribution into biology, using it as the very trait variability which characterizes a population and drives evolution.
\begin{quote}
\emph{I know of scarcely anything so apt to impress the imagination as the wonderful form of cosmic order expressed by the ``Law of Frequency of Error.''\ldots\
The huger the mob, and the greater the apparent anarchy, the more perfect is its sway. It is the supreme law of Unreason.}~\cite[p.66]{galton1889natural}
\end{quote}

But Galton's encomium to the Gaussian distribution does not survive contact with real data, when multiple contributing factors may combine in nonlinear ways, e.g., via epistatic interactions in genetics and wave coupling in fluid dynamics, creating non-Gaussian distributions. This has led to a curious mismatch between data sets in diverse fields~\cite{taleb2023statistical}, which are often non-Gaussian, and analytical tools which assume linear combination of effects --- biology and fluid dynamics are just two of many striking examples of this. The discrepancy between linear analytic tools and nonlinear processes has led to intensified interest in developing higher-order statistics~\cite{mendel1991tutorialHigherOrderStats, swami1997bibHOS}.

Fourier analysis provides a promising approach. There is both theoretical and holistic value in understanding how ``both sides of the Fourier coin'' contribute to a function's statistics. The amount of nonlinearity may be limited by assumptions that interactions are sparse and of low degree. Fourier techniques generate a plethora of distribution curves with variable skewness, kurtosis, and higher moments by titrating the phase and magnitude of nonlinear/interaction terms. As such, we are able to achieve a gentle transition from linear approximations to more realistic nonlinear systems.

\subsection{Overview of paper}
This paper grew out of work in~\cite{doro2021fourierTransQuantTraitCS}, where we were interested in how information about the roughness of a trait's genetic landscape is contained in its Fourier transform. This naturally led us to ponder how the Fourier transform can reveal the statistics of a general function under the influence of multiple agents. For a function, or process~$\bs{f}$ defined on a finite abelian group, we ask the question
$$
\text{\emph{How do the population moments of~$\bs{f}$ depend on its Fourier transform~$\hatbsf$?}}
$$
Our main tool to answer this is~\eqref{Eq:mu_m^a=g*gtg*g}. This generates Theorem~\ref{Thm:mthMoment_a}, which has two salient components: (1) a closed-form expression for any moment of~$\bs{f}$ in terms of~$\hatbsf$, and (2) an index annihilation property that limits how the Fourier coefficients can combine.

We only address the situation where each factor is of finite length, but incorporating infinite-length Fourier transforms is also possible for some or all of the factors, at the expense of nontrivial functional analysis. We also assume access to all observations of a given process (thereby deeming it ``deterministic''); however, stochastic processes can be accommodated by employing an expectation operator, assuming a probability density or mass function is known or assumed.

\subsubsection{Applications}
On the practical side, we envision these methods being applied to scenarios including
\begin{itemize}
\item \emph{Calculating} the statistics of a function from its Fourier transform, e.g., if the full set of outcomes of~$\bs{f}$ is incomplete/corrupted but we are able to somehow obtain or assume~$\hatbsf$, perhaps using compressive sensing~\cite{CanRomTao_Noise}, a sparse Fourier transform~\cite{gross2022sparseFourierManyVariables}, or some other machine learning technique. This is probably most useful for smaller problems or if~$\hatbsf$ is sparse.

\item \emph{Analyzing} or \emph{designing} a function's behavior in its Fourier domain before synthesizing it (whether mathematically or in a possibly costly physical form). In line with this, there are many scenarios that are analogous to working in the Fourier domain, such as graphs/hypergraphs and association schemes~\cite{delsarte1973algebraic}.

\item As a \emph{feasibility constraint} in search/optimization algorithms. Possible applications include phase recovery and blind deconvolution, e.g., X-ray crystallography and channel equalization, where only the magnitude of the coefficients of~$\hatbsf$ is known~\cite{wolf2011historyXrayPhaseProblem}.
\end{itemize}
Although not necessary, if the support of~$\hatbsf$ is sparse or somehow structured (e.g., low-degree or low-frequency concentration or representing some underlying geometry), then operating in the Fourier domain is even more justified.

\subsubsection{Methodology and paper layout}
Our $n$-dimensional model follows the vectorized setup in~\cite{good1958interaction}; the mixing property of the Kronecker product organizes all combinations into a mathematically coherent structure that naturally paves the way to identify interactions (see Remark~\ref{Rem:Mixing_property}). We adopt a linear algebra approach using elementary Fourier theorems and circulant matrices to effect autoconvolutions. Specifically, analytical expressions for the \mth\ moment of~$\bs{f}$ can be found from the \mth\ power of the circulant matrix associated with~$\hatbsf$. Although explicit convolutions are required to find these representations, the computational cost should not be too severe, especially if~$\hatbsf$ is sparse.

The paper is organized as follows. Section~\ref{Sec:MathSetup} reviews the necessary statistics and mathematics and sets up our notation. Our main results are presented in Section~\ref{Sec:Moments_in_Fourier_domain}, which are specialized in Section~\ref{Sec:SpecialCase_Z2n} for the case of $G=\Int_2^n$. Section~\ref{Sec:Applications} discusses some example applications, and Section~\ref{Sec:Conclusion} contains concluding remarks. Appendix~\ref{App:TimeSeries_mthOrderMoment_nDim} shows how the \mth-order moment in a time series can be extended to $n$ dimensions.

\subsubsection{Connection with previous work}
During the final stages of completing this paper, we became aware of a large corpus of work in the signal processing and time series communities (including plasma physics, oceanography, fluid dynamics, general wave theory, and nonlinear optics) that use Fourier analysis to analyze higher-order moments, cumulants,\footnote{Higher-order moments and cumulants are used to identify functions that depart from ``Gaussianity.'' However, our methods are also relevant to Gaussian distributions when used as a \emph{constraint}; see Remark~\ref{Rem:Normal_constraint} in Section~\ref{Sec:FeasibilityConstraint_DFT_example}.} and their associated polyspectra, e.g., see~\cite{mendel1991tutorialHigherOrderStats, nikias1993higher, brillinger1983timeseriesfreqdom, priestley1988nonlinearnonstationarytimeseries, swami1997bibHOS}. However, as far as we can tell, in the discrete case, these models all seem to be defined on~$\Int^n$ or~$\Int_N^n$, for dimension $n=1,2,3$.

Separately, the field of \emph{experimental design} uses general $n$-dimensional transforms (usually not Fourier) on full factorial models to identify interactions between~$n$ factors affecting an observed phenomenon --- each factor can be thought of as existing along a unique axis, and any interactions (or ``entanglements'') are between axes. This coincides with the $n$-dimensional Fourier transform when the factors are strictly binary, realized by the Sylvester-Hadamard matrix of order~$2^n$. Good~\cite{good1958interaction} summarized the efficient mathematical methods used on full factorial models in conjunction with Kronecker products and generalized it to Fourier analysis.\footnote{Good's paper~\cite{good1958interaction} was a major motivation to Cooley and Tukey's seminal FFT algorithm\cite{cooleytukey1965fft}.}

The Kronecker product facilitates juggling multidimensional indices\footnote{Or the ``debauch of indices'' in Spivak's memorable description of the tensor calculus~\cite[Ch.~5]{spivak1979comprehensive}.} in the natural endeavor of extending Fourier techniques to the statistics of time series beyond one dimension. Moreover, its use in multi-factor circulant matrices~\eqref{Def:PermMatrix_KG_basis},~\eqref{Def:CircMatrix_KG_basis} is the mechanism that directly leads to our derivation of the \mth\ moment, $\dimin{\mu}{a}_m\gr{\bs{f}}$. That is, autoconvolutions via powers of circulant matrices~\eqref{Eq:C_*^mf=C_f^m} naturally do the heavy lifting for us --- they implicitly carry out the exhaustive combinatorial work, determining which~$m$ Fourier coefficients are permitted to appear in terms satisfying the annihilation constraint (i.e., $j_1 \oplus \cdots \oplus j_m = 0$) in~\eqref{ThmEq:mthMoment_a}.

As far as we can tell, this perspective/technique does not appear in the literature.
In line with this, our derivation of the annihilation condition seems to be unique (see the end of proof for Theorem~\ref{Thm:mthMoment_a}); typically it is obtained via the approach in Appendix~\ref{App:TimeSeries_mthOrderMoment_nDim}, or by invoking Fourier-Stieltjes representation theory~\cite[Ch.~2.3.2]{nikias1993higher},~\cite[Ch.~3.2]{priestley1988nonlinearnonstationarytimeseries} or by perturbing a Hamiltonian system~\cite[Ch.~1]{morbidelli2002modernCelestialMechanics}.

\section{Mathematical setup} \label{Sec:MathSetup}
\subsection{Statistics background}
In this section, we briefly review the basic statistical definitions and concepts necessary for later in the paper. Assume $\bs{f}: G\rightarrow\C$ for some finite abelian group~$G$ with $\bs{f} = \gr{f_0,f_1,\ldots,f_{\abs{G}-1}}^\top$, where~$\gr{\cdot}^\top$ denotes the (non-conjugate) transpose. In practice, group~$G$ represents our sample space.

\subsubsection{General, raw, and central populations moments} \label{Sec:Gen_raw_central_momennts}
The \emph{\mth\ general moment of~$\bs{f}$ centered at point~$a\in\C$} is defined for integer $m\geq0$ as
\begin{equation} \label{Def:mthMoment_a}
\dimin{\mu}{a}_m\gr{\bs{f}} \,:=\; \frac{1}{\abs{G}} \sum_{i\in G} \gr{f_i-a}^m.
\end{equation}
The \emph{\mth\ raw moment of~$\bs{f}$} (denoted with an accent mark) coincides with the origin, $a=0$, and the \emph{\mth\ central moment of~$\bs{f}$} (with no superscript) coincides with the population mean, $a=\mu$:
\begin{equation} \label{Def:mthRawCentralMoments}
\mu'_m\gr{\bs{f}} \,:=\; \frac{1}{\abs{G}} \sum_{i\in G} f_i^m, \quad\qquad
\mu_m\gr{\bs{f}} \,:=\; \frac{1}{\abs{G}} \sum_{i\in G} \gr{f_i-\mu}^m.
\end{equation}

The base cases, $m=0,1$, for the general moment of~$\bs{f}$ are straightforward. It should be clear that
\begin{equation} \label{Eq:mthMoment_base_cases}
\dimin{\mu}{a}_0\gr{\bs{f}} = 1 \qquad\text{and}\qquad \dimin{\mu}{a}_1\gr{\bs{f}} = \mu-a
\end{equation}
for any $a\in\C$.
For $m=1$, this trivially reduces to $\mu'_1\gr{\bs{f}} = \mu$ when $a=0$, and $\mu_1\gr{\bs{f}} = 0$ when $a=\mu$.

The moments defined above can take on complex values if~$\bs{f}$ is a complex-valued function. The \emph{variance}, however, is always real and nonnegative:
\begin{equation} \label{Def:Variance}
\sigma^2\gr{\bs{f}} \,:=\; \frac{1}{\abs{G}} \sum_{i\in G} \abs{f_i-\mu}^2.
\end{equation}
Of course, $\mu_2 = \sigma^2$ when~$\bs{f}$ is real.

\subsubsection{Standardized moments}
Consider a histogram of all values of a given function~$\bs{f}$. There are many ways to describe the shape of this distribution. One of these measures is \emph{standardized moments}, which are just the central moments normalized by appropriate powers of the variance: $\mu_m/\sigma^m$ (with $\sigma\neq0$).

In brief, for a given distribution:
the \emph{skewness,} $\gamma := \mu_3/\sigma^3$, assesses its symmetry;
the \emph{kurtosis,} $\kappa := \mu_4/\sigma^4$, measures the fatness of its tails;
the \emph{hyperskewness,} $\gamma_5 := \mu_5/\sigma^5$, captures any extreme asymmetries;
and the \emph{hyperkurtosis,} $\kappa_6 := \mu_6/\sigma^6$, embodies how extreme the kurtosis is. Standardized moments for $m>6$ can be calculated if needed.

\subsection{The domain}
Let us first consider a $1$-dimensional setting, i.e., a single variable or factor with $N$ equally spaced values, which is modeled by $N$ equally spaced points on the unit circle. This is the finite cyclic group $\Int_N := \Int/N\Int = \set{0,1,\ldots,N-1}$, the ring of integers modulo~$N$, with cardinality~$\abs{\Int_N}=N$.

Now consider~$n$ factors, where the \ellth\ factor has $N_\ell$ equally spaced values. This $n$-dimensional setup is modeled\footnote{More general models are possible, such as the direct product of~$K$ objects, $G = G_1 \times G_2 \times \cdots \times G_K$, where the \kth\ object,~$G_k$, is of the form~\eqref{Eq:FiniteAbelianGroup_G} with $n_k$ dimensions. For example, a system of $K$ different waves in $3$-dimensional space would have $n_k=3$ for each $k=1,2,\ldots,K$.} by the direct product of~$n$ finite cyclic groups
\begin{equation} \label{Eq:FiniteAbelianGroup_G}
G \,=\, \Int_{N_1} \times \Int_{N_2} \times \cdots \times \Int_{N_n}
\end{equation}
which is a finite abelian group with cardinality $\abs{G} = N_1 N_2 \cdots N_n$.

The elements of~$G$ are \emph{$n$-tuples}
\begin{equation} \label{Eq:i_n-tuple}
i := \vect{\bcode{i}_1, \bcode{i}_2, \ldots, \bcode{i}_n} \in G
\end{equation}
(always in ``TrueType'' font and enclosed with square brackets), where $\bcode{i}_\ell\in\Int_{N_\ell}$ for $\ell=1,2,\ldots,n$. Defining~$j\in G$ similarly, we can add~$i$ and~$j$ in a natural way, modulo~$N_\ell$ in the \ellth\ direction. Thus, the \emph{group operation of addition in~$G$} is simply
$$
i \oplus j \,:=\, \vect{\bcode{i}_1+\bcode{j}_1 \!\!\!\!\pmod{N_1},\, \bcode{i}_2+\bcode{j}_2 \!\!\!\!\pmod{N_2},\, \ldots\, , \, \bcode{i}_n+\bcode{j}_n \!\!\!\!\pmod{N_n}}.
$$
In the $1$-dimensional case with $G = \Int_N$, $i \oplus j$ reduces to regular addition modulo~$N$.
It trivially follows that \emph{group subtraction} is
$i \ominus j \,:=\, i \oplus \gr{\ominus j}$, where the \emph{additive inverse} of~$j\in G$ is
$\ominus j \,:=\, \Vect{\bcode{N}_1-\bcode{j}_1, \bcode{N}_2-\bcode{j}_2, \ldots, \bcode{N}_n-\bcode{j}_n}$.

Sometimes, it is easier to represent an $n$-tuple as a decimal integer. This can be achieved by the mapping $i = \sum_{\ell=1}^n \bcode{i}_\ell \cdot \AutoGroup{\prod_{k=\ell+1}^n N_k}$, but other mappings exist too. For a given mapping, each $n$-tuple~$i\in G$ has a unique decimal equivalent. When $N_\ell=N$ for all $\ell=1,2,\ldots,n$, then this is just the standard conversion from base-$N$ to decimal. For example, if each $N_\ell=2$, then the elements of $G=\Int_2^n$ are $n$-bit codes and this is the conversion from binary to decimal, $i = \sum_{\ell=1}^n \bcode{i}_\ell \cdot 2^{\ell-1}$.
The lexicographical order of the $n$-tuples coincides with the sequential order of their decimal equivalents, so unless necessary, we make no distinction between them; the context should make it clear.

\subsection{The space of functions} \label{Sec:Space_of_functions}
For finite abelian group $G$, let $L\gr{G}=\set{\bs{f}:G\rightarrow\C}$ be the set of complex-valued functions defined on $G$ with column vector\footnote{The vectorization or ``flattening'' of function~$\bs{f}$ is based on the lexicographic ordering of group $G$.} $\bs{f} = \gr{f_0,f_1,\ldots,f_{\abs{G}-1}}^\top$. The natural \emph{inner product} for such a space is $\ip{\bs{f}}{\bs{g}} := \sum_{i\in G} f_i \overline{g_i}$, for any $\bs{f},\bs{g}\in L\gr{G}$, where~$\overline{\:\cdot\:}$ denotes complex conjugation, which induces the \emph{Euclidean norm}, $\norm{\bs{f}}=\ip{\bs{f}}{\bs{f}}^{{1}/{2}}$. Similar to the inner product is the simple \emph{dot product} (with double angle brackets),
\begin{equation} \label{Def:dotProduct}
\qquad\dotprod{\bs{f}}{\bs{g}} \,:=\; \sum_{i\in G} f_i \:\! g_i \,=\, \ip{\bs{f}}{\overline{\bs{g}}}, \qquad \text{for any } \bs{f},\bs{g}\in L\gr{G}.
\end{equation}
We point out that $\ip{\bs{f}}{\bs{g}} = \overline{\ip{\bs{g}}{\bs{f}}}$, however $\dotprod{\bs{f}}{\bs{g}} = \dotprod{\bs{g}}{\bs{f}}$.

Recall the operation of group subtraction. For $j\in G$, let~$\mathcal{S}_j$ be the \emph{$n$-dimensional circulant shift operator} defined by
\begin{equation} \label{Def:ShiftOperator}
\mathcal{S}_j\bs{f} \,:=\, \gr{f_{0\ominus j},f_{1\ominus j},\ldots,f_{\gr{\abs{G}-1}\ominus j}}^\top
\end{equation}
where in the \ith\ position, $f_{i\ominus j} = f_{\vect{\bcode{i}_1-\bcode{j}_1 \!\!\pmod{N_1},\, \bcode{i}_2-\bcode{j}_2 \!\!\pmod{N_2},\, \ldots\, , \, \bcode{i}_n-\bcode{j}_n \!\!\pmod{N_n}}}$. When $n=1$ this reduces to the familiar circulant shift operator on $\Int_N$. Next, we overload the~$\ominus$ symbol to define~$\bs{f}_{\!\ominus}$ as the coefficients of~$\bs{f}$ listed in \emph{negative sequential order}:
\begin{equation} \label{Def:fnc_f_ominus}
\bs{f}_{\!\ominus} \,:=\, \gr{f_0,f_{\ominus1},\ldots,f_{\ominus\gr{\abs{G}-1}}}^\top
\end{equation}
where in the \jth\ position, $f_{\ominus j} = f_{\vect{\bcode{N}_1-\bcode{j}_1, \bcode{N}_2-\bcode{j}_2, \ldots, \bcode{N}_n-\bcode{j}_n}}$.

For $j\in G$, let $\bs{e}_j\in L\gr{G}$ be the \emph{standard basis element} with a `$1$' in the \jth\ position and `$0$' elsewhere. Then $\bs{I} = \bs{I}_G := \gr{\bs{e}_0,\bs{e}_1,\bs{e}_2,\ldots,\bs{e}_{\abs{G}-1}}$ is the canonical $\abs{G}\text{-by-}\abs{G}$ identity matrix.

\subsection{Some algebra}
For vectors $\bs{f},\bs{g}\in L\gr{G}$, their \emph{element-wise product} has the natural form
\begin{equation} \label{Def:fdotg}
\bs{f}\cdot\bs{g} \,:=\, \gr{f_0\:\!g_0,f_1\:\!g_1,\ldots,f_{\abs{G}-1}\:\!g_{\abs{G}-1}}^\top.
\end{equation}
For any $m\geq0$, we use the notation $\bs{f}^m$ to represent the \emph{\mth\ power of the individual elements of a vector}, $\bs{f}^m := \gr{f^m_0,\,f^m_1,\,\ldots\,,\,f^m_{\abs{G}-1}}^\top$, where~$\bs{f}^0 := \gr{1,1,\ldots,1}$.

Next, let $\bs{f} \conv \bs{g}$ represent the \emph{$n$-dimensional circular convolution} of vectors $\bs{f},\bs{g}\in L\gr{G}$, whose \ith\ element is defined as
\begin{equation} \label{Def:Conv_finiteAbelianGroupG}
\qquad\qquad \gr{\bs{f} \conv \bs{g}}_i \,:=\, \sum_{j\in G} f_{i \ominus j} \:\! g_j, \qquad \text{for } i\in G
\end{equation}
which has the typical form of $1$-dimensional convolution defined on~$\Int_N$. It is straightforward to show that convolution is commutative and associative in $n$ dimensions.

For any $m\text{-by-}n$ matrix $\bs{A}=\Gr{a_{i,j}}_{i,j}$ and $p\text{-by-}q$ matrix~$\bs{B}=\Gr{b_{k,l}}_{k,l}$, their \emph{Kronecker (or tensor) product} is defined as the $mp\text{-by-}nq$ matrix
\begin{equation} \label{Def:KroneckerProduct}
\bs{A}\otimes\bs{B} \,:=\;
\GR{a_{i,j}\bs{B}}_{i,j}
\end{equation}
where $i\in\Int_m$, $j\in\Int_n$, $k\in\Int_p$, $l\in\Int_q$.

\begin{remark}[Combinatorial mixing property] \label{Rem:Mixing_property}
The Kronecker product possesses a \emph{mixing property} such that every combination of the elements of~$\bs{A}$ and~$\bs{B}$ occurs exactly once in $\bs{A}\otimes\bs{B}$. Specifically, the unique mixture $a_{i,j}b_{k,l}$ is the $\gr{ip+k,jq+l}$th element of $\bs{A}\otimes\bs{B}$. This engenders a natural framework to identify interactions/coupling between different entities.
\end{remark}

\subsection{The DFT}
The discrete Fourier transform (DFT) is arguably the most often implemented type of Fourier transform.\footnote{Otherwise, Strang's claim in~\cite{strang1994wavelets} that the DFT's fast version, the FFT (e.g.,~\cite{cooleytukey1965fft}), is ``the most important numerical algorithm of our lifetime'' would be irrelevant.} The main actor in the DFT is the \emph{DFT matrix}. The most common form of the DFT is when it operates on $1$-dimensional functions, i.e., those defined on~$\Int_N$~\cite[Ch.~2]{terras1999FourierAnalysisFiniteGroups}. In this case the DFT matrix is just $\bs{U}_{\Int_N} := \Gr{\omega_N^{\ominus ij}}_{i,j\in\Int_N}$,
where $\omega_N := \e^{2\pi\imag/N}$ and $\imag := \sqrt{-1}$. The rows/columns of~$\bs{U}_{\Int_N}$ are orthogonal and have a Euclidean length of~$N$, so $\bs{U}_{\Int_N}^*\bs{U}_{\Int_N} = \bs{U}_{\Int_N}\bs{U}_{\Int_N}^* = N\bs{I}$,
where $\gr{\cdot}^*$ is the Hermitian transpose.

It is straightforward to extend the DFT matrix to~$n$ dimensions, each of possibly different size~\cite[\S11]{good1958interaction},~\cite[Ch.~10]{terras1999FourierAnalysisFiniteGroups}. Let~$G$ be the direct product of~$n$ finite cyclic groups~\eqref{Eq:FiniteAbelianGroup_G}. The multidimensional DFT matrix defined on this group is just the Kronecker product~\eqref{Def:KroneckerProduct} of the DFT matrices associated with each~$\Int_{N_\ell}$:
\begin{equation} \label{Eq:Multidim_U_G}
\bs{U}_{\:\!\!G} \,=\, \bs{U}_{\Int_{N_1}} \otimes \bs{U}_{\Int_{N_2}} \otimes \cdots \otimes  \bs{U}_{\Int_{N_n}}.
\end{equation}
For $i,j\in G$, its \ijth\ element is the unique mixture of powers of the~$n$ roots of unity (see Remark~\ref{Rem:Mixing_property}):
\begin{equation} \label{Eq:U_G(i,j)}
\bs{U}_{\:\!\!G}\gr{i,j} = \prod_{\ell=1}^n \omega_{N_\ell}^{\ominus\bcode{i}_\ell\bcode{j}_\ell}.
\end{equation}
Due to other properties of the Kronecker product~\cite[\S4.2]{horn1994topics}, the DFT matrix obeys $\bs{U}_{\:\!\!G}^* \bs{U}_{\:\!\!G} =
\bs{U}_{\:\!\!G} \bs{U}_{\:\!\!G}^* = \abs{G}\:\!\bs{I}$.

With the multidimensional DFT matrix defined, we can now discuss the DFT operating on functions that depend on multiple factors. We emphasize that the entries of the forward Fourier transform reveal nonlinearities and interactions between the~$n$ influencing factors.

\begin{definition}[Discrete Fourier transform] \label{Def:DFT}
Let $\bs{f} \in L\gr{G}$, with $G$ a finite abelian group~\eqref{Eq:FiniteAbelianGroup_G}. \emph{The forward DFT\footnote{We normalize by $\abs{G}$ in the forward direction because we want the Fourier domain to constitute \emph{averages}. All of the results in this paper still hold if the reader wishes to use the form of the DFT where the $1/\abs{G}$ factor occurs in~\eqref{Def:inverseDFT} rather than~\eqref{Def:forwardDFT}; however a bit of extra attention is required. The same is true for the unitary definition of the DFT, i.e., when $1/\abs{G}^{1/2}$ is included in both~\eqref{Def:forwardDFT} and~\eqref{Def:inverseDFT}.} of~$\bs{f}$} is
\begin{equation} \label{Def:forwardDFT}
\hatbsf \,=\, \F\gr{\bs{f}} \,:=\, \frac{1}{\abs{G}} \bs{U}_{\:\!\!G} \bs{f}
\end{equation}
where~$\bs{U}_{\:\!\!G}$ in~\eqref{Eq:Multidim_U_G} is the DFT matrix associated with~$G$. For dual group~$\widehat{G}$ of~$G$ and $\hatbsf \in L\gr{\widehat{G}}$, the \emph{inverse DFT of~$\hatbsf$} is
\begin{equation} \label{Def:inverseDFT}
\bs{f} \,=\, \F^{-1}\gr{\hatbsf} \,:=\,  \bs{U}_{\:\!\!G}^* \hatbsf.
\end{equation}
Functions obeying~\eqref{Def:forwardDFT} and~\eqref{Def:inverseDFT} are said to be a \emph{Fourier pair}, summarized as
\begin{equation} \label{Def:F-pair}
\bs{f} \;\stackrel{\mathcal{F}}\rightleftharpoons\; \hatbsf
\end{equation}
where the direction of the arrows indicate the forward and inverse transforms.
\end{definition}

We assume the zeroth row/column of our DFT matrices are all $+1$'s.\footnote{There exist DFT matrices where this is not the case, e.g., whose rows or columns are permuted or are multiplied by $-1$; see~\cite[Def.~2.12]{Horadam_HadamardMatrices} and~\cite[Eq.~(13)]{somma2016quantumSimulations}.} Because of this, the zeroth element of~$\hatbsf$  will always represent the \emph{mean,~$\mu$ of~$\bs{f}$:}
\begin{equation} \label{Eq:hat{f}=mu}
\hat{f}_0 \,:=\, \frac{1}{\abs{G}} \dotprod{\bs{1}}{\bs{f}} \,=\, \frac{1}{\abs{G}} \sum_{i\in G} f_i \,=:\, \mu
\end{equation}
where $\bs{1}$ denotes the \emph{length-$\abs{G}$ vector of all ones}.

\subsubsection{Basic Fourier theorems}
There are well known properties and theorems associated with Fourier transforms. We next review those that are necessary for the subsequent sections. Assume the Fourier pairs $\bs{f} \stackrel{\mathcal{F}}\rightleftharpoons \hatbsf,\, \bs{g} \stackrel{\mathcal{F}}\rightleftharpoons \hat{\bs{g}}$ exist.

The first Fourier property is the \emph{convolution theorem}, which links convolution~\eqref{Def:Conv_finiteAbelianGroupG} in one domain to pointwise multiplication~\eqref{Def:fdotg} in the other:
$\bs{f} \conv \bs{g} \stackrel{\mathcal{F}}\rightleftharpoons \abs{G}\;\! \hatbsf \cdot \hat{\bs{g}}.$ From duality, we also have
\begin{equation} \label{Def:ConvThm_fdotg_fconvg}
\bs{f} \cdot \bs{g} \;\stackrel{\mathcal{F}}\rightleftharpoons\; \hatbsf \conv \hat{\bs{g}}.
\end{equation}
Repeated application with~$\bs{g} = \bs{f}$ yields
\begin{equation} \label{Eq:f^m=*^mfhat}
\bs{f}^m \;\stackrel{\mathcal{F}}\rightleftharpoons\;
\conv^m \hatbsf
\end{equation}
where $\conv^m \hatbsf$ denotes the \emph{autoconvolution of~$m$ copies of~$\hatbsf$}. The base cases of autoconvolution for $m=0,1$ are
\begin{equation} \label{Def:conv^0,1}
\conv^0 \!\hatbsf \,:=\, \bs{e}_0
\qquad\text{and}\qquad
\conv^1 \!\hatbsf \,:=\, \hat{\bs{f\,}}
\end{equation}
where~$\bs{e}_0$ is the zeroth standard basis element defined at the end of Section~\ref{Sec:Space_of_functions}.

The second Fourier property describes a (scaled) isometry between the two domains. \emph{Parseval/Plancherel's theorem} states that within a constant of proportionality, the inner product of two vectors is preserved under transformation: $\ip{\bs{f}}{\bs{g}} = \abs{G} \:\!\ip{\hatbsf}{\hat{\bs{g}}}$. We can use this in conjunction with~\eqref{Def:dotProduct} to derive a similar relationship for the dot product of two vectors,
\begin{equation} \label{Eq:Parseval_fg_dotprod}
\dotprod{\bs{f}}{\bs{g}} \,=\ \abs{G} \:\!\dotprod{\hatbsf}{\hat{\bs{g}}_\ominus}
\end{equation}
because $\mathcal{F}\gr{\overline{\bs{g}}} = \overline{\hat{\bs{g}}}_\ominus$ (see~\eqref{Def:fnc_f_ominus}). Of course, $\ip{\bs{f}}{\bs{g}} = \dotprod{\bs{f}}{\bs{g}}$
when~$\bs{f}$ and~$\bs{g}$ are real; on the Fourier side, this is true because of conjugate symmetry: $\hat{\bs{g}}_\ominus = \overline{\hat{\bs{g}}}$. When $G=\Int_2^n$,~\eqref{Eq:Parseval_fg_dotprod} simplifies because $\hat{\bs{g}}_\ominus = \hat{\bs{g}}$ (see Section~\ref{Sec:SpecialCase_Z2n}).

\begin{remark}[Group algebra]
Typically in harmonic analysis, the Fourier transform is viewed as providing a ring isomorphism between $\Gr{L\gr{G},+,\conv}$ and $\Gr{L\gr{\widehat{G}},+,\cdot}$, where the convolution theorem endows each space with a multiplication operation.

But the main tool we employ in Section~\ref{Sec:Moments_in_Fourier_domain} is the dual interpretation of the convolution theorem~\eqref{Def:ConvThm_fdotg_fconvg}. That is, we evaluate moments of~$\bs{f}$ defined on~$G$ by pivoting to autoconvolutions of~$\hatbsf$ defined on~$\widehat{G}$. Therefore, we are interested in the ring isomorphism between $\Gr{L\gr{G},+,\cdot}$ and
$\Gr{L\gr{\widehat{G}},+,\conv}$.
\end{remark}

\subsubsection{The $a$-diminished transform}
Suppose we subtract some constant $a$ from the elements of function~$\bs{f}$, i.e., $\bs{f}-a\bs{1}$. This is just a shift of the bias; since $\bs{1} \stackrel{\mathcal{F}}\rightleftharpoons \bs{e}_0$,
the consequence in the Fourier domain is that only the zeroth entry of~$\hatbsf$~\eqref{Eq:hat{f}=mu} will be affected.
Thus, for $a\in\C$,
\begin{equation} \label{Def:a-diminished_F-pair}
\bs{f}-a\bs{1} \;\;\stackrel{\mathcal{F}}\rightleftharpoons\;\;
\dimin{\hatbsf}{a}
\end{equation}
where the \emph{$a$-diminished transform, $\dimin{\hatbsf}{a}$}, is defined as \emph{$\hatbsf$ with its zeroth entry reduced by~$a$}:
\begin{center}
\fbox{
\addtolength{\linewidth}{-45\fboxsep}
\begin{minipage}{\linewidth}
    \begin{equation} \label{Def:a-diminished_Xform}
    \dimin{\hatbsf}{a} :=\, \hatbsf - a\bs{e}_0 \,=\,
    \left\{
      \begin{array}{ll}
        \dimin{\hat{f}}{a}_0 = \mu-a, & \quad\hbox{if $j=0$;} \\[4pt]
        \dimin{\hat{f}}{a}_j = \hat{f}_j,    & \quad\hbox{otherwise.}
      \end{array}
    \right.\;
    \end{equation}
    \vspace{1pt}
\end{minipage}
}\\[15pt]
\end{center}
For example, when $a=0$, we trivially have the original, undiminished vector
\begin{equation} \label{Eq:0-diminished_Xform}
\dimin{\hatbsf}{0} \,=\, \hatbsf \,=\, \gr{\mu,\hat{f}_1,\hat{f}_2,\ldots,\hat{f}_{\abs{G}-1}}^\top
\end{equation}
and when $a=\mu$,
\begin{equation} \label{Eq:mu-diminished_Xform}
\dimin{\hatbsf}{\mu} \,=\, \gr{0,\hat{f}_1,\hat{f}_2,\ldots,\hat{f}_{\abs{G}-1}}^\top.
\end{equation}

\subsection{Circulant matrices and subtraction tables}
We are interested in circulant matrices because they are the operators that implement convolution. As before, suppose~$G$ is the direct product of~$n$ finite cyclic groups~\eqref{Eq:FiniteAbelianGroup_G}.

Define the \emph{basic circulant permutation matrix associated with cyclic group~$\Int_N$} as $\Cbasis_{\:\!\!\Int_N} := \gr{\bs{e}_1,\bs{e}_2,\ldots,\bs{e}_{N-1},\bs{e}_0}$, i.e., the identity matrix~$\bs{I}$ with its zeroth column,~$\bs{e}_0$, shifted to the back~\cite[\S0.9.6]{hornJohnson1985MatrixAnalysis}. The \emph{multi-factor circulant permutation matrix associated with group~$G$} is $\Cbasis_{\;\!\!G} \;:=\; \Cbasis_{\;\!\!\Int_{N_1}} \otimes \Cbasis_{\;\!\!\Int_{N_2}} \otimes \cdots \otimes \Cbasis_{\;\!\!\Int_{N_n}}$. Products of powers of~$\Cbasis_{\:\!\!G}$ obey
$\Cbasis_{\:\!\!G}^i \Cbasis_{\:\!\!G}^j = \Cbasis_{\:\!\!G}^{i\oplus j}$ for $i,j\in G$, and the set $\Set{\Cbasis_{\:\!\!G}^0,\Cbasis_{\:\!\!G}^1,\ldots,\Cbasis_{\:\!\!G}^{\abs{G}-1}}$ constitutes a basis for circulant matrices defined on group~$G$, where for $k=\vect{\bcode{k}_1,\bcode{k}_2,\ldots,\bcode{k}_n}\in G$,
\begin{equation} \label{Def:PermMatrix_KG_basis}
\Cbasis_{\;\!\!G}^k \,:=\; \Cbasis_{\;\!\!\Int_{N_1}}^{\bcode{k}_1} \otimes \Cbasis_{\;\!\!\Int_{N_2}}^{\bcode{k}_2} \otimes \cdots \otimes \Cbasis_{\;\!\!\Int_{N_n}}^{\bcode{k}_n}.
\end{equation}
Then for vector~$\bs{f} = \gr{f_0,f_1,\ldots,f_{\abs{G}-1}}^\top$ with $\bs{f}:G\rightarrow\C$, the \emph{multi-factor circulant matrix associated with~$\bs{f}$} is
\begin{equation} \label{Def:CircMatrix_KG_basis}
\bs{C}_{\:\!\!\bs{f}} \,:=\; \sum_{k\in G} f_k \Cbasis_{\;\!\!G}^k.
\end{equation}
This reduces to the typical single-factor circulant matrix when $n=1$.

From its construction, the \ijth\ element of the $\abs{G}\text{-by-}\abs{G}$ circulant matrix~$\bs{C}_{\:\!\!\bs{f}}$ is a function of the difference $i \ominus j$:
\begin{equation*} \label{Def:CircMatrix_Cf_ij}
\qquad\bs{C}_{\:\!\!\bs{f}}\gr{i,j} \,:=\, f_{i \ominus j}, \qquad \text{for } i,j\in G.
\end{equation*}
Hence, its \jth\ column is~$\mathcal{S}_j\bs{f}$~\eqref{Def:ShiftOperator}, its \ith\ row is~$\mathcal{S}_i\bs{f}_{\!\ominus}^\top$~\eqref{Def:fnc_f_ominus}, and its main diagonal is~$f_0\bs{I}$, i.e.,~$\bs{C}_{\:\!\!\bs{f}}$ is composed of $n$-dimensional cyclic shifts of~$\bs{f}$ and~$\bs{f}_{\!\ominus}^\top$. (Note, some authors define a $1$-dimensional circulant matrix in terms of the difference $j \ominus i$, which results in the transpose of our~$\bs{C}_{\:\!\!\bs{f}}$, e.g., see~\cite[\S0.9.6]{hornJohnson1985MatrixAnalysis}.)

Next consider the product of two circulant matrices~$\bs{C}_{\:\!\!\bs{f}}, \bs{C}_{\;\!\!\bs{g}}$. For $i,j\in G$, its \ijth\ element is the dot product of the \ith\ row of~$\bs{C}_{\:\!\!\bs{f}}$ with the \jth\ column of~$\bs{C}_{\;\!\!\bs{g}}$:
\begin{equation} \label{Eq:C_fC_g_ij}
\bs{C}_{\:\!\!\bs{f}}\:\!\bs{C}_{\;\!\!\bs{g}}\gr{i,j} \,=\, \dotprod{\mathcal{S}_i\bs{f}_{\!\ominus}}{\mathcal{S}_j\bs{g}} \,=\, \sum_{l\in G} f_{i\ominus l} \:\! g_{l\ominus j} \,=\, \sum_{\substack{p,q\in G\\[2pt] p\oplus q = i \ominus j}} f_p \;\! g_q
\end{equation}
which is simply the dot product of~$\bs{f}_{\!\ominus}$ and~$\bs{g}$ with a relative shift of $i\ominus j$.

\subsubsection{Canonical examples}
Without a doubt, circulant matrices based on cyclic group $\Int_N$ are the most common. For function~$\bs{f} = \gr{f_0,f_1,\ldots,f_{N-1}}^\top$ with $\bs{f}:\Int_N\rightarrow\C$, its associated single-factor circulant matrix is
\begin{equation} \label{Def:CircMatrix_generalExample_ZN}
\bs{C}_{\:\!\!\bs{f}} \,\;=\;\;
\sum_{k\in \Int_N} f_k \Cbasis_{\;\!\!\Int_N}^k
\:\;=\;\:
\!\!\!\!\begin{array}{c}
      \begin{blockarray}{ccccccc}
        \scriptstyle i\backslash j & \;\scriptstyle 0 & \scriptstyle 1 & \scriptstyle \cdots & \scriptstyle N-2 & \scriptstyle N-1 \; \\[3pt]
        \begin{block}{c(cccccc)}
            \scriptstyle 0 & \;f_0 & f_{N-1}    & \cdots & f_2      & f_1 \; \\[2pt]
            \scriptstyle 1 & \;f_1 & f_0 & f_{N-1}    & \cdots & f_2 \; \\[2pt]
            \scriptstyle \vdots & \;\vdots & f_1     & f_0      & \ddots & \vdots \; \\[2pt]
            \scriptstyle N-2 & \;f_{N-2}    & \vdots & \ddots & \ddots & f_{N-1} \; \\[2pt]
            \scriptstyle N-1 & \;f_{N-1}    & f_{N-2}    & \cdots & f_1      & f_0 \; \\
        \end{block}\\[-8pt]
      \end{blockarray}\,.
\end{array}\\[-17pt]
\end{equation}

\begin{remark}
Powers of~$\Cbasis_{\;\!\!\Int_N}$ in~\eqref{Def:CircMatrix_generalExample_ZN} are responsible for the constant diagonals (parallel to the main diagonal) seen in single-factor circulant matrices. This makes their cyclic structure obvious:
each subsequent row is a cyclic shift right, and each subsequent column is cyclic shift down.

However, this behavior is generally \emph{not true} for powers of~$\Cbasis_{\;\!\!G}$  in~\eqref{Def:CircMatrix_KG_basis} for multi-factor circulant matrices.
While powers of each~$\Cbasis_{\;\!\!\Int_{N_\ell}}$ will yield constant diagonals within its respective  $N_\ell\text{-by-}N_\ell$ matrix, their $n$-fold Kronecker product in~\eqref{Def:PermMatrix_KG_basis} will not, due to the mixing property (see Remark~\ref{Rem:Mixing_property}). Instead, we will see cyclic shifts within and across sub-blocks of~$\bs{C}_{\:\!\!\bs{f}}$, acting like ``gears in a clock, each rotating at its own rate and scale.''
\end{remark}

For example, suppose $G=\Int_2^n$. The multi-factor circulant matrix associated with function~$\bs{f} = \gr{f_0,f_1,\ldots,f_{2^n-1}}^\top$ where $\bs{f}:\Int_2^n\rightarrow\C$ is
\vspace{5pt}
\begin{eqnarray} \label{Def:CircMatrix_generalExample_Z2n}
\bs{C}_{\:\!\!\bs{f}} &=& \sum_{k\in \Int_2^n} f_k \Cbasis_{\;\!\!\Int_2^n}^k \\[3pt]
&=&
\!\!\!\!\begin{array}{c}
    \begin{blockarray}{cccccccccc}
        \scriptstyle i\backslash j & \;\scriptstyle 0 & \scriptstyle 1 & \scriptstyle 2 & \scriptstyle 3 & \scriptstyle \cdots & \scriptstyle 2^n-4 & \scriptstyle 2^n-3 & \scriptstyle 2^n-2 & \scriptstyle 2^n-1 \; \\[3pt]
        \begin{block}{c(cc|cc|c|cc|cc)}
            \scriptstyle 0 & \scriptstyle \;f_0 & \scriptstyle f_1 & \scriptstyle f_2 & \scriptstyle f_3 & \cdots & \scriptstyle f_{2^n-4} & \scriptstyle f_{2^n-3} & \scriptstyle f_{2^n-2} & \scriptstyle f_{2^n-1} \; \\[6pt]
            \scriptstyle 1 & \scriptstyle \;f_1 & \scriptstyle f_0 & \scriptstyle f_3 & \scriptstyle f_2 & \cdots & \scriptstyle f_{2^n-3} & \scriptstyle f_{2^n-4} & \scriptstyle f_{2^n-1} & \scriptstyle f_{2^n-2} \; \\[1pt]
            \BAhhline{&----------}
            \scriptstyle 2 & \scriptstyle \;f_2 & \scriptstyle f_3 & \scriptstyle f_0 & \scriptstyle f_1 & \cdots & \scriptstyle f_{2^n-2} & \scriptstyle f_{2^n-1} & \scriptstyle f_{2^n-4} & \scriptstyle f_{2^n-3} \; \\[6pt]
            \scriptstyle 3 & \scriptstyle \;f_3 & \scriptstyle f_2 & \scriptstyle f_1 & \scriptstyle f_0 & \cdots & \scriptstyle f_{2^n-1} & \scriptstyle f_{2^n-2} & \scriptstyle f_{2^n-3} & \scriptstyle f_{2^n-4} \; \\[1pt]
            \BAhhline{&----------}
            \scriptstyle \vdots & \;\vdots & \vdots & \vdots & \vdots & \ddots & \vdots & \vdots & \vdots & \vdots \; \\[3pt]
            \BAhhline{&----------}
            \scriptstyle 2^n-4 & \scriptstyle \;f_{2^n-4} & \scriptstyle f_{2^n-3} & \scriptstyle f_{2^n-2} & \scriptstyle f_{2^n-1} & \cdots & \scriptstyle f_0 & \scriptstyle f_1 & \scriptstyle f_2 & \scriptstyle f_3 \; \\[6pt]
            \scriptstyle 2^n-3 & \scriptstyle \;f_{2^n-3} & \scriptstyle f_{2^n-4} & \scriptstyle f_{2^n-1} & \scriptstyle f_{2^n-2} & \cdots & \scriptstyle f_1 & \scriptstyle f_0 & \scriptstyle f_3 & \scriptstyle f_2\; \\[1pt]
            \BAhhline{&----------}
            \scriptstyle 2^n-2 & \scriptstyle \;f_{2^n-2} & \scriptstyle f_{2^n-1} & \scriptstyle f_{2^n-4} & \scriptstyle f_{2^n-3} & \cdots & \scriptstyle f_2 & \scriptstyle f_3 & \scriptstyle f_0 & \scriptstyle f_1 \; \\[6pt]
            \scriptstyle 2^n-1 & \scriptstyle \;f_{2^n-1} & \scriptstyle f_{2^n-2} & \scriptstyle f_{2^n-3} & \scriptstyle f_{2^n-4} & \cdots & \scriptstyle f_3 & \scriptstyle f_2 & \scriptstyle f_1 & \scriptstyle f_0 \; \\
    \end{block}\\[-6pt]
    \end{blockarray}
\end{array} \nonumber
\end{eqnarray}\\[-25pt]
where $\Cbasis_{\;\!\!\Int_2^n} = \Cbasis_{\;\!\!\Int_2} \otimes \Cbasis_{\;\!\!\Int_2} \otimes \cdots \otimes \Cbasis_{\;\!\!\Int_2}$ ($n$ copies). The rightmost copy of~$\Cbasis_{\;\!\!\Int_2}$ is responsible for the \emph{micro} $\Int_2$-cycles within each of the delineated $2\text{-by-}2$ sub-blocks ($16$ of the $\gr{2^{n-1}}^2$ sub-blocks are shown). The second rightmost copy of~$\Cbasis_{\;\!\!\Int_2}$ is responsible for the $\Int_2$-cycles of the $2\text{-by-}2$ sub-blocks across each of the $4\text{-by-}4$ sub-quadrants ($4$ of the $\gr{2^{n-2}}^2$ sub-quadrants are shown), and so on. Finally, the leftmost copy of~$\Cbasis_{\;\!\!\Int_2}$ is responsible for the \emph{macro} $\Int_2$-cycle of the four $2^{n-1}\text{-by-}2^{n-1}$ quadrants across the whole $2^n\text{-by-}2^n$ matrix.

\subsubsection{Subtraction tables} \label{Sec:SubtractionTables}
We now introduce the \emph{index subtraction table~$\bs{\mathcal{A}}_{\,G}$ associated with group~$G$}, simply defined as the $\abs{G}\text{-by-}\abs{G}$ matrix with
\begin{equation} \label{Def:SubtractionMatrix_A_G_ij}
\qquad\bs{\mathcal{A}}_{\,G}\gr{i,j} \::=\; i \ominus j, \qquad \text{for } i,j\in G.
\end{equation}
Given any circulant matrix~$\bs{C}_{\:\!\!\bs{f}}$ with~$\bs{f}$ defined on~$G$, the index subtraction table~$\bs{\mathcal{A}}_{\,G}$ is easily obtained by mapping each cell containing~$f_k$ to the value~$k$, and vice versa:
\begin{equation} \label{Def:C_f<-->A_G}
\bs{C}_{\:\!\!\bs{f}} \,\; \stackrel{f_k \leftrightarrow k}{\longleftrightarrow} \,\; \bs{\mathcal{A}}_{\,G}
\end{equation}
(there are algebraic methods to construct~$\bs{\mathcal{A}}_{\,G}$ directly from~$G$, but we do not present them here). Clearly,~$\bs{C}_{\:\!\!\bs{f}}$ and~$\bs{\mathcal{A}}_{\,G}$ possess the same patterns and structure. In particular, every index $k\in G$ appears exactly once in each row and column of~$\bs{\mathcal{A}}_{\,G}$.

The utility of a given subtraction table~$\bs{\mathcal{A}}_{\,G}$ is that it can help to understand the group structure of $G$. This may not be possible by simply looking at a circulant matrix~$\bs{C}_{\:\!\!\bs{f}}$ defined on~$G$ because its $\set{f_k}$ values are arbitrary; the patterns will be obscured by small or large dynamic ranges. But the elements, $k \in G$, in~$\bs{\mathcal{A}}_{\,G}$ are unique, and their decimal equivalents have an ordered, linear dynamic range.

For example, consider the visual depictions of the $64\text{-by-}64$ subtraction tables~$\bs{\mathcal{A}}_{\,\Int_{64}}$ and~$\bs{\mathcal{A}}_{\,\Int_2^6}$ in Figure~\ref{Fig:SubtractionTable_Z64_Z2^6}; these are associated with the circulant matrices in~\eqref{Def:CircMatrix_generalExample_ZN} with $N=64$, and in~\eqref{Def:CircMatrix_generalExample_Z2n} with $n=6$, respectively. The group structures are immediately clear --- the simple pattern in~$\bs{\mathcal{A}}_{\,\Int_{64}}$ is familiar and intuitive, while the contrapuntal patterns in~$\bs{\mathcal{A}}_{\,\Int_2^6}$ at all $6$ scales require some contemplative thought. Further, all subtraction tables defined on $G=\Int_N$ will look similar to~$\bs{\mathcal{A}}_{\,\Int_{64}}$, and all of those defined on $G = \Int_2^n$ will look essentially the same as~$\bs{\mathcal{A}}_{\,\Int_2^6}$ due to their multiscale dyadic structure.

\begin{figure}[!tb]
\centering
\vspace{10pt}
\hspace{1pt} Subtraction table $\bs{\mathcal{A}}_{\,\Int_{64}}$ \hspace{91pt} Subtraction table $\bs{\mathcal{A}}_{\,\Int_2^6}$\quad~{}\\
\includegraphics[scale=0.3, trim={60 120 50 25mm},clip] 
{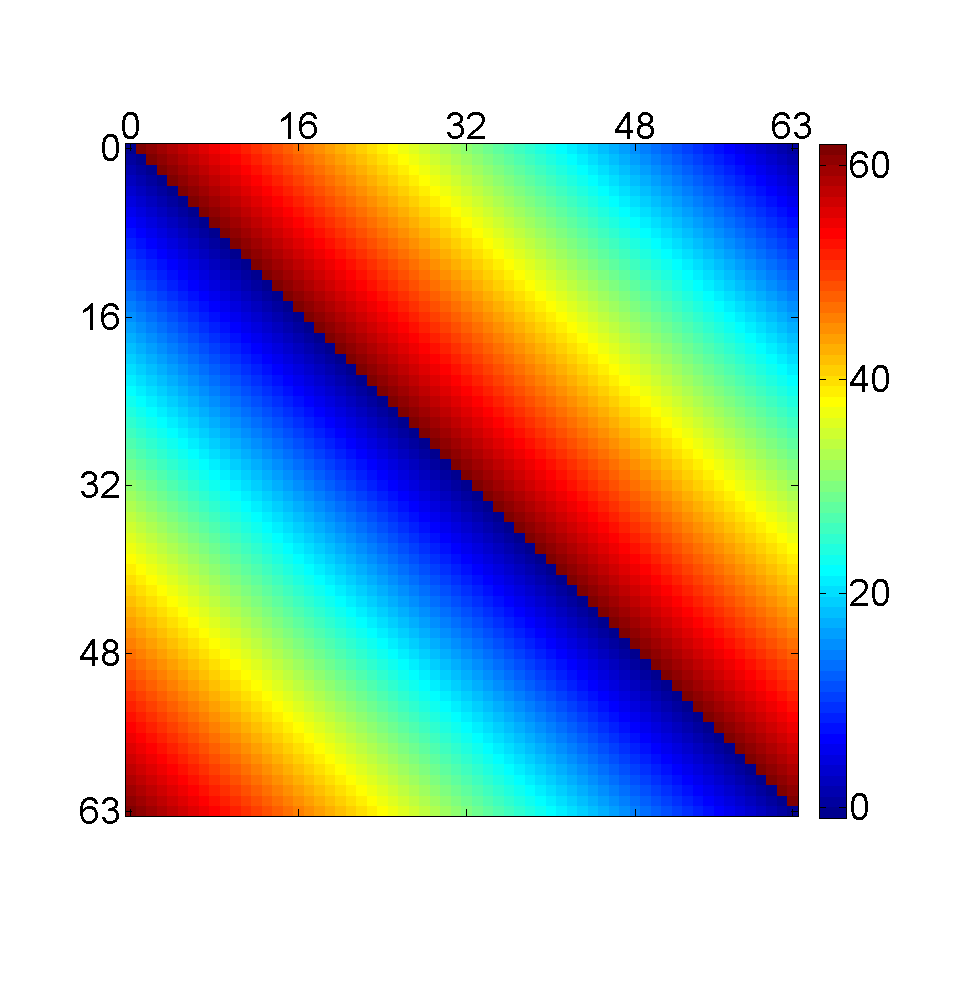}
\qquad
\includegraphics[scale=0.3, trim={60 120 50 25mm},clip] 
{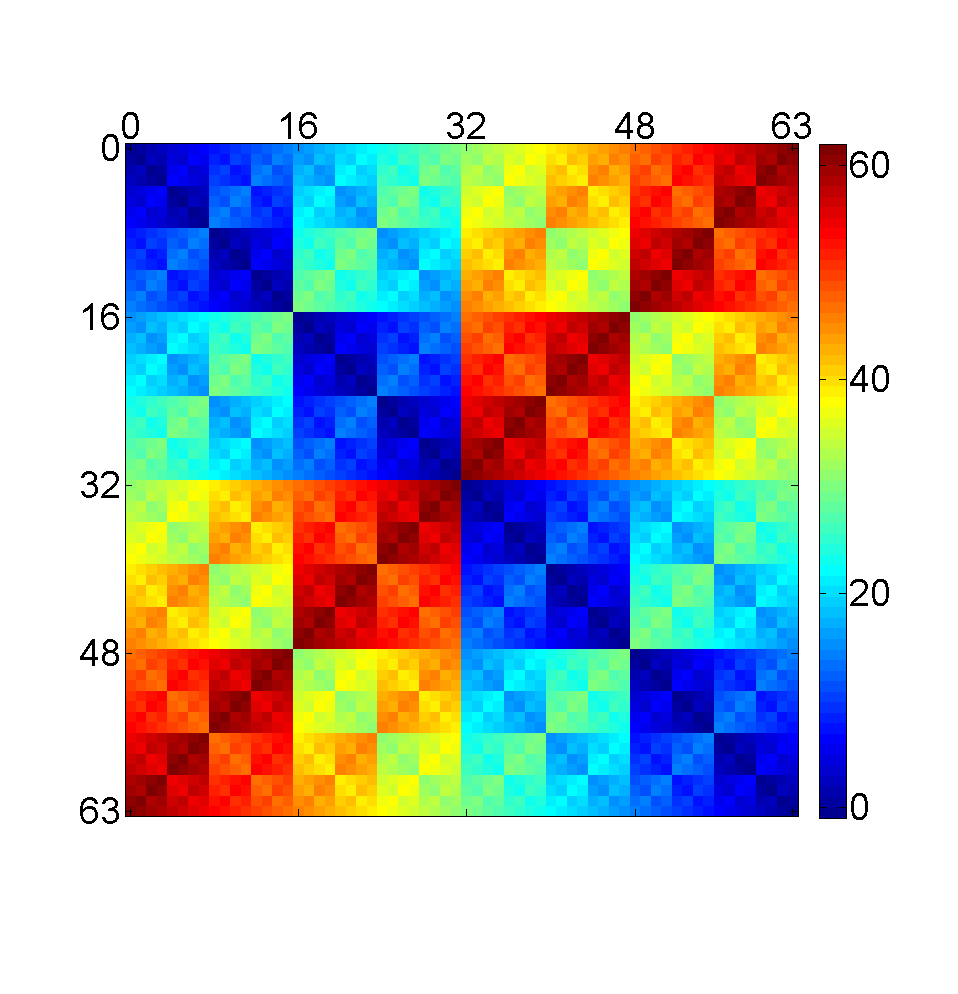}
\caption{Heat maps of~$\bs{\mathcal{A}}_{\,\Int_{64}}$ (left) and~$\bs{\mathcal{A}}_{\,\Int_2^6}$ (right). Both subtraction tables are $64\text{-by-}64$, and each row and column contain the integers $0,1,\ldots,63$, yet they look completely different due to their respective group structures. Notice, $\bs{\mathcal{A}}_{\,\Int_{64}}$ has just $n=1$ macro $\Int_{64}$-cycle, while~$\bs{\mathcal{A}}_{\,\Int_2^6}$ has $\Int_2$-cycles across all sub-quadrants at each of the $n=6$ scales. \label{Fig:SubtractionTable_Z64_Z2^6}}
\end{figure}

More can be said about~$\bs{\mathcal{A}}_{\,\Int_2^n}$. Of significant note is the fact that $k=\ominus k$ for each $k\in\Int_2^n$. Thus, for an $n$-bit string, bitwise subtraction is the same as bitwise addition (bitwise \bcode{XOR} operation). As such, $\bs{\mathcal{A}}_{\,\Int_2^n}$ is both a subtraction and an addition table, and $\bs{\mathcal{A}}_{\,\Int_2^n} = \bs{\mathcal{A}}_{\,\Int_2^n}^\top$. These tables also represent \emph{nimber addition} in combinatorial game theory~\cite{berlekamp2000WinningWaysMathematicalPlays}.

\subsection{Implementation of convolution}
We now address the obvious connection between circulant matrices and circular convolution. Assume we have the $\abs{G}\text{-by-}\abs{G}$ circulant matrices~$\bs{C}_{\:\!\!\bs{f}},\bs{C}_{\;\!\!\bs{g}}$ associated with vectors~$\bs{f},\bs{g}$ defined on finite abelian group~$G$.

Let us reexamine the \ijth\ element of the product of~$\bs{C}_{\:\!\!\bs{f}}$ and~$\bs{C}_{\;\!\!\bs{g}}$. Setting ${j=0}$ in~\eqref{Eq:C_fC_g_ij}, we see that the first summation (with dummy variable~$l$) is precisely the definition of convolution in~\eqref{Def:Conv_finiteAbelianGroupG} (with dummy variable~$j$). Therefore, the zeroth column of~$\bs{C}_{\:\!\!\bs{f}}\:\!\bs{C}_{\;\!\!\bs{g}}$ represents $\bs{f} \conv \bs{g}$. But the zeroth column of~$\bs{C}_{\:\!\!\bs{g}}$ is vector~$\mathcal{S}_0\bs{g}=\bs{g}$ itself. Thus, the convolution of two vectors can be realized by a matrix-vector product:
\begin{equation} \label{Eq:Conv_f*g=C_fg}
\bs{f} \conv \bs{g} \;=\; \bs{C}_{\;\!\!\bs{f}} \;\! \bs{g} \;=\;
\bs{C}_{\bs{g}} \:\! \bs{f} \;
\end{equation}
where we used commutativity, $\bs{f} \conv \bs{g} = \bs{g} \conv \bs{f}$, for the second equality. From this, it is easy to show when $\bs{g}=\bs{f}$ and for any integer $m\geq0$ that
\begin{equation} \label{Eq:C_*^mf=C_f^m}
\bs{C}_{\gr{\:\!\!\conv^m\:\!\!\bs{f}}} \,=\, \bs{C}_{\:\!\!\bs{f}}^m
\end{equation}
(note, the autoconvolution base cases in~\eqref{Def:conv^0,1} are of the same form for $\conv^0\bs{f}$ and $\conv^1\bs{f}$).

\begin{remark}
Our generalized circulant matrices~\eqref{Def:CircMatrix_KG_basis} are a form of \emph{currying}~\cite{curry1958combinatory}. The underlying group provides a matrix pattern seen in the associated subtraction table~\eqref{Def:SubtractionMatrix_A_G_ij}; a function~$\bs{f}$ inserted according to this pattern~\eqref{Def:C_f<-->A_G} yields a circulant matrix~$\bs{C}_{\:\!\!\bs{f}}$ as the linear transformation which effects the operation of convolution~\eqref{Eq:Conv_f*g=C_fg} in the algebra of functions on the underlying group.\footnote{The process of reducing a binary product to a family of operators predates Curry's work~\cite{Frege1893FREGDA2}. It is a foundation of Moses Sch\"onfinkel's theory of combinators~\cite{schonfinkel1924buildingBlocksLogic,wolfram2021combinators}. Baez~\cite{baez2011aRosettaStone} discusses the extension to category theory.}
\end{remark}

\section{Evaluating moments in the Fourier domain} \label{Sec:Moments_in_Fourier_domain}
As a reminder to the reader, we are interested in obtaining the \emph{population statistics} of~$\bs{f}$, not its sample statistics, via the moments in Section~\ref{Sec:Gen_raw_central_momennts}. Rather than work in the observation domain, we wish instead to evaluate these moments directly from the (often more concise) Fourier domain description,~$\hatbsf$. Below, we present how to express the \mth\ moment,~$\dimin{\mu}{a}_m$, and the variance,~$\sigma^2$, in the Fourier domain, along with a curious annihilation property.

\subsection{The \mth\ moment from Fourier coefficients}
Recall the \mth\ general moment of~$\bs{f}$ from~\eqref{Def:mthMoment_a} and assume functions $\bs{f} \! \stackrel{\mathcal{F}}\rightleftharpoons \! \hatbsf$ are a Fourier pair~\eqref{Def:F-pair}. Notice that the summation can be rewritten using a dot product~\eqref{Def:dotProduct}. Then we can use Parseval/Plancherel's theorem~\eqref{Eq:Parseval_fg_dotprod}, the $a$-diminished transform~\eqref{Def:a-diminished_F-pair}, and the (auto)convolution theorem~\eqref{Eq:f^m=*^mfhat} to express this in the Fourier domain. Hence, for integer $m\geq0$ and some integer $0 \leq p \leq m$, the \mth\ general moment of~$\bs{f}$ about point~$a\in\C$ in terms of its Fourier coefficients is
\begin{center}
\fbox{
\addtolength{\linewidth}{-55\fboxsep}
 \begin{minipage}{\linewidth}
    \vspace{-5pt}
    \begin{eqnarray}
    \dimin{\mu}{a}_m\gr{\bs{f}} &:=& \frac{1}{\abs{G}} \sum_{i\in G} \gr{f_i-a}^m \nonumber \\
    &=& \frac{1}{\abs{G}} \Dotprod{\gr{\bs{f} - a\bs{1}}^{m-p}}{\gr{\bs{f} - a\bs{1}}^p} \;\; \nonumber\\[5pt]
    &=& \Dotprod{\!\conv^{m-p} \!\dimin{\hatbsf}{a}}{\conv^p \dimin{\hatbsf}{a}_{\!\ominus}}. \label{Eq:mu_m^a=g*gtg*g}
    \end{eqnarray}
    \vspace{-15pt}
 \end{minipage}
}\\[15pt]
\end{center}

We will be almost exclusively interested in raw and central moments~\eqref{Def:mthRawCentralMoments}. Viewing these cases from the Fourier side, we see that the ``energy'' of the mean, $\mu$, is fully present in the \mth\ raw moment~$\mu'_m\gr{\bs{f}}$ when~$\dimin{\hatbsf}{0}$~\eqref{Eq:0-diminished_Xform} is substituted into~\eqref{Eq:mu_m^a=g*gtg*g}, and is fully absent in the \mth\ central moment~$\mu_m\gr{\bs{f}}$ when~$\dimin{\hatbsf}{\mu}$~\eqref{Eq:mu-diminished_Xform} is used, as desired.

\subsubsection{Agreement with the base cases}
The base cases of moments, derived in the original domain~\eqref{Eq:mthMoment_base_cases}, can also be viewed through our Fourier lens via the base cases of autoconvolution in~\eqref{Def:conv^0,1}. For $m=0$ we must have $p=0$ in~\eqref{Eq:mu_m^a=g*gtg*g}, so the zeroth moment is unity because for any $a\in\C$
$$
\dimin{\mu}{a}_0\gr{\bs{f}} \,=\, \Dotprod{\!\conv^0 \!\dimin{\hatbsf}{a}}{\conv^0 \dimin{\hatbsf}{a}_{\!\ominus}}
\,=\, \dotprod{\bs{e}_0}{\bs{e}_0} \,=\, 1.
$$
For $m=1$ and $p=0$ in~\eqref{Eq:mu_m^a=g*gtg*g}, the first general moment centered at $a$ is
\begin{equation} \label{Eq:AgreeBaseCase_m=1}
\dimin{\mu}{a}_1\gr{\bs{f}} \,=\, \Dotprod{\!\conv^1 \!\dimin{\hatbsf}{a}}{\conv^0 \dimin{\hatbsf}{a}_{\!\ominus}}
\,=\, \dotprod{\dimin{\hatbsf}{a}}{\bs{e}_0} \,=\, \dimin{\hat{f}}{a}_0 \,:=\, \mu - a
\end{equation}
from~\eqref{Def:a-diminished_Xform} (using $p=1$ yields the same). These both agree with~\eqref{Eq:mthMoment_base_cases}, as expected.

\subsubsection{The variance from Fourier coefficients}
We cannot use~\eqref{Eq:mu_m^a=g*gtg*g} when evaluating the variance~\eqref{Def:Variance} since it involves the sum of magnitudes squared. However, the traditional form of Parseval/Plancherel's theorem is appropriate along with $a=\mu$ in~\eqref{Def:a-diminished_F-pair} and~\eqref{Eq:mu-diminished_Xform}:
\begin{equation}  \label{Eq:variance_|fj|^2}
\sigma^2\gr{\bs{f}} \;:=\; \frac{1}{\abs{G}} \sum_{i\in G} \abs{f_i-\mu}^2
\;=\; \frac{1}{\abs{G}} \Norm{\bs{f} - \mu\bs{1}}^2
\;=\; \Norm{\dimin{\hatbsf}{\mu}}^2
\;=\;
\!\sum_{j\in G\backslash0} \Abs{\hat{f}_j}^2.
\end{equation}
Here, $G\backslash0$ is shorthand for~$G\backslash\set{0}$, the removal of element~$0$ from the set~$G$.

\subsection{A surprising filtering property}
Consider the \mth\ raw moment of a function~$\bs{f}$. Carrying out the dot product in~\eqref{Eq:mu_m^a=g*gtg*g} with Fourier transform~$\hatbsf$ in~\eqref{Eq:0-diminished_Xform}
results in terms that contain $m$ coefficients: $\Set{\hat{f}_{j_1} \hat{f}_{j_2} \cdots \hat{f}_{j_m}}$. This is to be expected since the autoconvolution~$\conv^{m-p} \hatbsf$ will have terms with ${m-p}$ coefficients, and the autoconvolution~$\conv^p {\hatbsf}_{\!\ominus}$ will have terms with~$p$ coefficients, so their dot product will result in terms with precisely~$m$ Fourier coefficients (however, it may be that some $\hat{f}_j$ occur more than once in a given term).

Surely though, not all combinations of Fourier coefficients should influence the \mth\ moment. For example, consider the group $G=\Int_3\times\Int_2$ with elements
$$
\vect{\bcode{0},\bcode{0}}=0,\quad \vect{\bcode{0},\bcode{1}}=\bcode{1},\quad \vect{\bcode{1},\bcode{0}}=2,\quad \vect{\bcode{1},\bcode{1}}=3,\quad \vect{\bcode{2},\bcode{0}}=4,\quad \vect{\bcode{2},\bcode{1}}=5
$$
and a function~$\bs{f}: \Int_3\times\Int_2 \rightarrow \C$. Suppose we are interested in the function's third raw moment, $\mu'_{3}$. Since ${\abs{\Int_3\times\Int_2}=6}$, the number of possible terms with three Fourier coefficients is $\Abs{\Set{\hat{f}_{j_1} \hat{f}_{j_2} \hat{f}_{j_3}}} = \binom{6+3-1}{3} = \binom{8}{3} = 56$ (the number of combinations with replacement), where $\binom{n}{k}$ is the \emph{\kth\ binomial coefficient}. Yet evaluating~\eqref{Eq:mu_m^a=g*gtg*g} with $a=0$ and $m=3$, we get
\begin{eqnarray*}
\mu'_{3}\gr{\bs{f}} &=& \hat{f}_0^3 + \hat{f}_2^3 + \hat{f}_4^3 + 3 \gr{\hat{f}_0 \hat{f}_1^2 + \hat{f}_2 \hat{f}_3^2 + \hat{f}_{\alertblue{4}} \hat{f}_{\alertblue{5}}^2} \nonumber\\
&& \;\; +\: 6 \gr{\hat{f}_0 \hat{f}_2 \hat{f}_4 + \hat{f}_1 \hat{f}_3 \hat{f}_4 + \hat{f}_1 \hat{f}_2 \hat{f}_5 + \hat{f}_0 \hat{f}_3 \hat{f}_5}
\end{eqnarray*}
which has just~$10$ unique terms. What makes these terms special, and the other~$46$ not? The answer lies in the underlying group operation: the indices of each $3$-coefficient term annihilate under modulo addition. For example, consider the $\hat{f}_{\alertblue{4}}\hat{f}_{\alertblue{5}}^2 = \hat{f}_{\alertblue{4}}\hat{f}_{\alertblue{5}}\hat{f}_{\alertblue{5}}$ term; summing these three indices yields
$$
4\oplus5\oplus5 = \vect{\bcode{2},\bcode{0}} \oplus \vect{\bcode{2},\bcode{1}} \oplus \vect{\bcode{2},\bcode{1}} = \vect{\bcode{6}\!\!\!\!\pmod{3},\bcode{2}\!\!\!\!\pmod{2}} = \vect{\bcode{0},\bcode{0}} = 0.
$$

This index annihilation property can be viewed as a sort of ``filtering'' in the sense that it restricts which Fourier coefficients can and cannot group together --- in fact, this holds true for every term in any~\mth\ moment of~$\bs{f}$. We present it next as a theorem.

\begin{thm}[$m$-Coefficient, Index Annihilation] \label{Thm:mthMoment_a}
Assume function~$\bs{f}:G\rightarrow\C$ with~$G$ a finite abelian group~\eqref{Eq:FiniteAbelianGroup_G}, and Fourier pair $\bs{f} \stackrel{\mathcal{F}}\rightleftharpoons \hatbsf$~\eqref{Def:F-pair}. Then for integer $m\geq1$, the \mth\ general moment of~$\bs{f}$ about point~$a\in\C$ is of the form
\begin{equation} \label{ThmEq:mthMoment_a}
\dimin{\mu}{a}_m\gr{\bs{f}}
\,\;=\;\; \!\!\sum_{\substack{j_q\:\!\in\:\!G\\[2pt]
\bigoplus_{q=1}^m j_q = 0}} \dimin{\hat{f}}{a}_{j_1} \dimin{\hat{f}}{a}_{j_2} \cdots \dimin{\hat{f}}{a}_{j_m}
\end{equation}
where the $a$-diminished transform~$\dimin{\hatbsf}{a}$
is defined in~\eqref{Def:a-diminished_Xform}.
That is, each term consists of a product of precisely $m$ Fourier coefficients, $\dimin{\hat{f}}{a}_{j_1} \dimin{\hat{f}}{a}_{j_2} \cdots \dimin{\hat{f}}{a}_{j_m}$, such that the modulo summation of the ${j_q}$'s is zero.\footnote{Index $j_q\in G$ identifies the \qth\ element of the Fourier transform. It should not be confused with the \qth\ position of an arbitrary $n$-tuple $j = \vect{\bcode{j}_1,\bcode{j}_2,\ldots,\bcode{j}_n} \in G$, where $\bcode{j}_q\in\Int_{N_q}$.}
\end{thm}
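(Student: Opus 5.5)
Start from the boxed identity~\eqref{Eq:mu_m^a=g*gtg*g} with the simplest choice $p=0$. Since $\conv^0 \dimin{\hatbsf}{a}_{\!\ominus} = \bs{e}_0$ by~\eqref{Def:conv^0,1}, this gives
$$
\dimin{\mu}{a}_m\gr{\bs{f}} \,=\, \Dotprod{\conv^m \dimin{\hatbsf}{a}}{\bs{e}_0} \,=\, \gr{\conv^m \dimin{\hatbsf}{a}}_0 .
$$
So the $m$th general moment is exactly the zeroth entry of the $m$-fold autoconvolution of the $a$-diminished transform. This is the same as the $(0,0)$ entry of $\bs{C}_{\dimin{\hatbsf}{a}}^m$ via~\eqref{Eq:C_*^mf=C_f^m}, which is the route the paper advertises. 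The whole proof then reduces to expanding this zeroth entry.

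Next I will prove, by induction on $m$, the closed form
$$
\gr{\conv^m \bs{g}}_i \,=\, \sum_{\substack{j_q\in G\\ j_1\oplus\cdots\oplus j_m = i}} g_{j_1}\cdots g_{j_m}
$$
for any $\bs{g}\in L\gr{\widehat G}$. For $m=1$ the statement is trivial. For the inductive step I write $\conv^{m}\bs{g} = \gr{\conv^{m-1}\bs{g}}\conv\bs{g}$ and apply~\eqref{Def:Conv_finiteAbelianGroupG}:
$$
\gr{\conv^m\bs{g}}_i=\sum_{j_m\in G} \gr{\conv^{m-1}\bs{g}}_{i\ominus j_m}\, g_{j_m}.
$$
Substituting the inductive hypothesis, the constraint $j_1\oplus\cdots\oplus j_{m-1} = i\ominus j_m$ is equivalent to $j_1\oplus\cdots\oplus j_m=i$. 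Here I use only associativity and commutativity of $\oplus$ together with the existence of inverses in $G$. Equivalently, in matrix form, repeated use of~\eqref{Eq:C_fC_g_ij} shows that $\bs{C}_{\bs g}^m(i,j)$ is the sum over $p_1\oplus\cdots\oplus p_m = i\ominus j$. Setting $i=j=0$ gives the annihilation condition.

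Finally, set $\bs{g}=\dimin{\hatbsf}{a}$ and $i=0$. This yields~\eqref{ThmEq:mthMoment_a} directly. Each term is visibly a product of exactly $m$ coefficients whose indices sum to $0$ under $\oplus$.

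As a consistency remark, the same conclusion should follow for any $p$ in~\eqref{Eq:mu_m^a=g*gtg*g}. The $\ominus$-reversal of the second factor contributes indices $\ominus k$, and the dot product pairs index $k$ of $\conv^{m-p}$ with index $\ominus k$. Writing the $p$ indices of the second factor as their inverses then again gives total sum zero. I may mention this as a check, but the proof only needs $p=0$.

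The main obstacle is bookkeeping rather than mathematics. The sum in~\eqref{ThmEq:mthMoment_a} must be read over \emph{ordered} $m$-tuples $(j_1,\ldots,j_m)$, not multisets. That is why the $\Int_3\times\Int_2$ example shows multiplicities $3$ and $6$: they count the orderings. I also need to make sure the index set $\widehat G$ is identified with $G$ (via~\eqref{Eq:U_G(i,j)}) so that $\oplus$ on Fourier indices is the group law of $G$.
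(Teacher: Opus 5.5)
Your proof is correct and is essentially the paper's argument: the paper also reduces $\dimin{\mu}{a}_m(\bs{f})$ to the $(0,0)$ entry of $\bs{C}_{\dimin{\hatbsf}{a}}^m$, which is the zeroth entry of $\conv^m \dimin{\hatbsf}{a}$. It then expands that entry by applying~\eqref{Eq:C_fC_g_ij} recursively, which gives the constraint $j_1\oplus\cdots\oplus j_m = k\ominus l$, and sets $k=l$. Your choice $p=0$ and your explicit induction on the convolution formula just make that recursion more explicit, and avoid the paper's detour through an arbitrary $p$ and the whole main diagonal.
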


\begin{proof}
Fix some integer $m\geq1$. Examining the right-hand side of~\eqref{Eq:mu_m^a=g*gtg*g} with any integer $0\leq p\leq m$, we see from~\eqref{Eq:C_*^mf=C_f^m} that $\conv^{m-p} \dimin{\hatbsf}{a}$ is just column $j=0$ of~$\bs{C}_{\!\!\dimin{\hatbsf}{a}}^{m-p}$, and $\conv^p \dimin{\hatbsf}{a}_{\!\ominus}$ is row $i=0$ of~$\bs{C}_{\!\!\dimin{\hatbsf}{a}}^{p}$. Thus we have $\dimin{\mu}{a}_m\;\!\!\gr{\bs{f}} = \bs{C}_{\!\!\dimin{\hatbsf}{a}}^m \gr{0,0}$.
Here, the relative shift is $i \ominus j=0$.
It follows that the main diagonal of $\bs{C}_{\!\!\dimin{\hatbsf}{a}}^m$ is
\begin{equation} \label{Eq:C_fC_f_kk-mu_m}
\bs{C}_{\!\!\dimin{\hatbsf}{a}}^m \gr{k,k} \,=\, \dimin{\mu}{a}_m\;\!\!\gr{\bs{f}}
\end{equation}
because for any $i=j=k\in G$, the relative shift will also be $i \ominus j=0$.

At the same time, we can invoke~\eqref{Eq:C_fC_g_ij} recursively to claim for any $k,l\in G$ that the \klth\ element of $\bs{C}_{\!\!\dimin{\hatbsf}{a}}^m = \bs{C}_{\!\!\dimin{\hatbsf}{a}}\bs{C}_{\!\!\dimin{\hatbsf}{a}}\cdots\bs{C}_{\!\!\dimin{\hatbsf}{a}}$ is
$$
\bs{C}_{\!\!\dimin{\hatbsf}{a}}^m \gr{k,l} = \sum_{j_q\in G} \hat{f}_{j_1}^{\gr{a}} \hat{f}_{j_2}^{\gr{a}} \cdots \hat{f}_{j_m}^{\gr{a}} \quad \text{s.t.} \quad j_1 \oplus j_2 \oplus \cdots \oplus j_m = k \ominus l.
$$
But the main diagonal enforces $k=l$, which yields~\eqref{ThmEq:mthMoment_a}.
\end{proof}

\begin{remark} \label{Rem:n-tupple_Annihilation}
Consider the $j_q$ indices of Theorem~\ref{Thm:mthMoment_a} as $n$-tuples.
Then the index annihilation condition, $j_1 \oplus j_2 \oplus \cdots \oplus j_m = 0$, means that the sum of the $n$-tuples in position~$\ell$ equals ${\bcode{0} \!\pmod{N_\ell}}$, for all $\ell=1,2,\ldots,n$.
\end{remark}

\subsubsection{Implications of Theorem~\ref{Thm:mthMoment_a}}
Let us derive the first and second moments of an arbitrary function using Theorem~\ref{Thm:mthMoment_a}. For $m=1$, the theorem demands that each term in~\eqref{ThmEq:mthMoment_a} possesses \emph{only one} Fourier coefficient,~$\hat{f}_{j}^{\gr{a}}$. But the only coefficient that satisfies the index annihilation constraint is $j=0$, hence
$$
\dimin{\mu}{a}_1\gr{\bs{f}} \,=\, \hat{f}_{0}^{\gr{a}} \,:=\, \mu-a.
$$
This is the same as~\eqref{Eq:AgreeBaseCase_m=1}, but the steps to obtain~$\hat{f}_{0}^{\gr{a}}$ are completely different.

For $m=2$, the terms can contain \emph{only two} Fourier coefficients, so the second moment must be of the form
$$
\dimin{\mu}{a}_2\gr{\bs{f}} \,=\, \sum_{j\in G} \:\! \hat{f}_{j}^{\gr{a}} \hat{f}_{\ominus j}^{\gr{a}}
$$
because of the index annihilation constraint.
If~$\bs{f}$ is a real-valued function, then its Fourier transform  will be conjugate symmetric, so~${\hatbsf}_{\!\ominus} = \overline{\hatbsf\,}$ and each term will reduce to~$\Abs{\hat{f}_{j}^{\gr{a}}}^2$. When $a=0$, this coincides with Parseval/Plancherel's conservation of energy theorem, and when $a=\mu$, this yields the variance~$\sigma^2\gr{\bs{f}}$~\eqref{Eq:variance_|fj|^2}.

We are mostly concerned with the raw and central moments of~$\bs{f}$, so setting $a=0$ and $a=\mu$ in~\eqref{ThmEq:mthMoment_a}, they are for $m\geq1$:
\begin{center}
\fbox{
\addtolength{\linewidth}{-70\fboxsep}
 \begin{minipage}{\linewidth}
    $$
    \mu'_m\gr{\bs{f}} \,=\; \!
    \sum_{\substack{j_q\:\!\in\:\!G\\[2pt]
    \bigoplus_{q=1}^m j_q = 0}} \hat{f}_{j_1} \hat{f}_{j_2} \cdots \hat{f}_{j_m}
    $$
 \end{minipage}
}\\[15pt]
\end{center}
and
\begin{center}
\fbox{
\addtolength{\linewidth}{-65\fboxsep}
 \begin{minipage}{\linewidth}
    \begin{equation} \label{ThmEq:mthMoment_mu}
    \!\!\mu_m\gr{\bs{f}} \,=\; \!
    \sum_{\substack{j_q\:\!\in\:\!G\backslash0\\[2pt] \bigoplus_{q=1}^m j_q = 0}} \hat{f}_{j_1} \hat{f}_{j_2} \cdots \hat{f}_{j_m}. \;
    \end{equation}
 \end{minipage}
}\\[15pt]
\end{center}

\subsection{Implementation and computational complexity of~$\dimin{\mu}{a}_m\gr{\bs{f}}$} \label{Sec:ComputationalComplexity}
In this section we discuss the computational effort to obtain~$\dimin{\mu}{a}_m\gr{\bs{f}}$ from the $a$-diminished transform~$\dimin{\hatbsf}{a}$, whether as a scalar value or as an algebraic expression. Blindly carrying out the summation in~\eqref{ThmEq:mthMoment_a} requires $m-1$ nested summations, similar to~\eqref{Eq:Nikias_mth-order_moment-zeroLags}.
For each combination of $j_1,j_2,\ldots,j_{m-1} \in G$, a unique $n$-tuple $j_m$~\eqref{Eq:j_m_resonance} must be calculated, which takes $\mathcal{O}\Gr{n\gr{m-1}}$ operations. There are~$\abs{G}^{m-1}$ different combinations to evaluate, so altogether this has a forbidding $\mathcal{O}\Gr{n\gr{m-1}\abs{G}^{m-1}}$ complexity.

However, this does not leverage our autoconvolutional approach, which does not employ any combinatorics. If~$\abs{G}$ is not too large, or if computing resources are not a concern, then it is easiest to just evaluate~$\bs{C}_{\!\!\dimin{\hatbsf}{a}}^m$ and then find~$\dimin{\mu}{a}_m\gr{\bs{f}}$ along its main diagonal~\eqref{Eq:C_fC_f_kk-mu_m}.

Otherwise, there are symmetries and tricks to implement~\eqref{Eq:mu_m^a=g*gtg*g} efficiently. First, we can always generate vector $\conv^p \dimin{\hatbsf}{a}_{\!\ominus}$ from vector~$\conv^{p} \dimin{\hatbsf}{a}$ by a permutation or a lookup table, which costs $\mathcal{O}\Gr{\abs{G}}$ operations. So we only need to be concerned with autoconvolutions of~$\dimin{\hatbsf}{a}$. Next, given~$\conv^{m-p} \dimin{\hatbsf}{a}$ and~$\conv^p \dimin{\hatbsf}{a}_{\!\ominus}$, the final step of their dot product in~\eqref{Eq:mu_m^a=g*gtg*g} requires only~$\mathcal{O}\Gr{\abs{G}}$ operations. For example, the second moment is just $\dimin{\mu}{a}_2\gr{\bs{f}} = \Dotprod{\dimin{\hatbsf}{a}}
{\dimin{\hatbsf}{a}_{\!\ominus}}$.

For pairs of moments, $m=2k$ and $m-1$ with $k\geq2$, recursively create vector $\conv^k \dimin{\hatbsf}{a} = \bs{C}_{\!\!\dimin{\hatbsf}{a}}^{k-1} \dimin{\hatbsf}{a} = \bs{C}_{\!\!\dimin{\hatbsf}{a}} \Gr{\cdots \bs{C}_{\!\!\dimin{\hatbsf}{a}} \Gr{\bs{C}_{\!\!\dimin{\hatbsf}{a}} \dimin{\hatbsf}{a}}}$, whose ${k-1}$ matrix-vector products require $\mathcal{O}\Gr{\gr{k-1}\abs{G}^2}$ operations. Then, we immediately get
$$
\dimin{\mu}{a}_{m-1}\gr{\bs{f}} = \Dotprod{\!\conv^{k} \!\dimin{\hatbsf}{a}}{\conv^{k-1} \dimin{\hatbsf}{a}_{\!\ominus}}
\qquad\text{and}\qquad
\dimin{\mu}{a}_m\gr{\bs{f}} = \Dotprod{\!\conv^{k} \!\dimin{\hatbsf}{a}}{\conv^k \dimin{\hatbsf}{a}_{\!\ominus}}.
$$
With~$m$ even, this requires $\mathcal{O}\Gr{\gr{\frac{m}{2}-1}\abs{G}^2}$ operations. Yet in the process, we also get all moments~$\dimin{\mu}{a}_{m'}\gr{\bs{f}}$ for $3\leq m'\leq m$. Further, if $\hatbsf$ is $s$-sparse, then this drops to $\mathcal{O}\Gr{\gr{\frac{m}{2}-1}\abs{G}s}$. If there is any other symmetry, structure, etc.~of the support of~$\hatbsf$, then additional reduction in the number of operations may be possible.

\section{Special case of $G=\Int_2^n$} \label{Sec:SpecialCase_Z2n}
Let us look more closely at the group $G=\Int_2^n$. This occurs when each $\Int_{N_\ell} = \Int_2$ in~\eqref{Eq:FiniteAbelianGroup_G}, i.e., each of the $n$ factors are binary. In this case, the group can be represented by an $n$-dimensional Boolean cube, with each of its $2^n$ vertices identified by a different element of~$\Int_2^n$. For a function~$\bs{f}$ defined on~$\Int_2^n$, its \ith\ value~$f_i$, stored at the \ith\ vertex, is the response to the \ith\ state of the~$n$ binary factors: $i=\vect{\bcode{i}_1, \bcode{i}_2, \ldots, \bcode{i}_n}$, with each $\bcode{i}_\ell\in\Int_2$.

There are endless applications of~$\Int_2^n$ in the natural and social sciences. Note that the~$n$ binary variables affecting function~$\bs{f}$ may be discrete (two different states, such as \bcode{A}/\bcode{B}) or continuous (\bcode{Low}/\bcode{High} values on a continuum), as well as binary nominal data. A short list includes graph/hypergraph theory, error-correcting codes, cryptography, solid state physics, computational chemistry, biology and genetics, combinatorial designs including design of experiments, social choice theory and voting, and economics. Section~\ref{Sec:Applications} contains two examples.

As mentioned in Section~\ref{Sec:SubtractionTables}, bitwise subtraction is the same as addition, thus all elements of $\Int_2^n$ are their own additive inverse. This means for any function~$\bs{f}$ defined on~$\Int_2^n$, we have $\bs{f} = \bs{f}_{\!\ominus}$ and $\hatbsf = {\hatbsf}_{\!\ominus}$. As such, when evaluating the \mth\ moment~$\dimin{\mu}{a}_m\gr{\bs{f}}$, the dot product in~\eqref{Eq:mu_m^a=g*gtg*g} simplifies (see Section~\ref{Sec:CentralMoments_Z2n}).

\subsection{Fourier transform on~$\Int_2^n$: Interactions}
The DFT matrix~\eqref{Eq:Multidim_U_G} associated with~$\Int_2^n$ is~$\bs{U}_{\Int_2^n}$, the \emph{Sylvester-Hadamard\,\footnote{These matrices are often incorrectly referred to as ``Walsh-Hadamard.'' The rows/columns of Walsh-Hadamard matrices are ordered in terms of their \emph{sequency}, while those of Sylvester-Hadamard are described as being in their \emph{natural} order. See~\cite[Def.~2.3, Eq.~(3.7)]{Horadam_HadamardMatrices} and~\cite[Footnote~5]{doro2021fourierTransQuantTraitCS}.} matrix of order~$2^n$}. The Fourier transform~$\hatbsf$, defined on the dual group~$\widehat{G}=\Int_2^n$, should be viewed as \emph{interactions between the~$n$ factors}. This group is represented by its own $n$-dimensional Boolean cube.

The \jth\ Fourier coefficient~$\hat{f}_j$ is stored at the \jth\ vertex, and~$\abs{\hat{f}_j}$ quantifies the strength of the \jth\ interaction, with $j = \vect{\bcode{j}_1, \bcode{j}_2, \ldots, \bcode{j}_n}$. But now each $\bcode{j}_\ell\in\Int_2$ is an \emph{indicator function} identifying whether or not the \ellth\ factor participates in the \jth\ interaction. Equivalently, we can represent index $j$ by the \emph{set of participating factors}. So in~$\Int_2^n$ there are three equivalent ways to represent an index: \emph{decimal, binary, set}. For example, for $n=3$ factors represented by the vertices labeled $\ell = 1,2,3$, the index $j = 6 = \vect{\bcode{0},\bcode{1},\bcode{1}} = \set{2,3}$.

For each $j\in\Int_2^n$, its \emph{degree} is simply the number of participating factors, i.e., the number of $\bcode{1}$'s in its binary representation: $\deg\gr{j}:=\sum_{\ell=1}^n \bcode{j}_\ell$ (or the cardinality of its set representation). The interactions of $\deg$-$1$ occur when there is just one $\bcode{1}$ in the \jth\ bit string; these are the so-called ``direct effects'' due to each factor acting alone. The $\deg$-$2$ interactions represent relationships between \emph{pairs of factors}, $\deg$-$3$ interactions represent relationships between \emph{three factors}, and so on.

Therefore, the Fourier transform~$\hatbsf$ on~$\Int_2^n$ can be pictured as a geometric complex; a \emph{hypergraph} where the $n$ factors are the nodes, and the \jth\ index with $\deg\gr{j}=k$ represents the $\gr{k-1}$-simplex formed by the participating~$k$ nodes, with~$\abs{\hat{f}_j}$ indicating its strength or size. In particular, a $\deg$-$2$ interaction is an edge between two nodes, a $\deg$-$3$ interaction is the \emph{area inside the triangle} formed by three nodes (not its edges), and a $\deg$-$4$ interaction is the \emph{volume inside the tetrahedron} formed by four nodes (not its edges or faces). If $\hat{f}_j = 0$ for $\deg\gr{j}>2$, then the hypergraph is just a graph.

The index annihilation condition of Theorem~\ref{Thm:mthMoment_a} can be interpreted either in terms of the indices' bit strings or their set representations. For $j_1,j_2\ldots,j_m \in \Int_2^n$, Remark~\ref{Rem:n-tupple_Annihilation} requires the \ellth\ position of their $n$-bit codes to contain an \emph{even number of $\bcode{1}$'s}. Equivalently, in set notation this means that each factor must appear an \emph{even number of times across the~$m$ sets}; then the symmetric difference of the $m$ sets will result in the empty set. For example, suppose we have $n=4$ factors, with vertices labeled as $\ell = 1,2,3,4$. Say we are interested in the $m=3$rd moment. Then for indices $j_1=6$, $j_2=11$, $j_3=13$, we have $j_1 \oplus j_2 \oplus j_3 = 0$ because
$\vect{\bcode{0},\bcode{1},\bcode{1},\bcode{0}} \oplus \vect{\bcode{1},\bcode{1},\bcode{0},\bcode{1}} \oplus \vect{\bcode{1},\bcode{0},\bcode{1},\bcode{1}} = \vect{\bcode{0},\bcode{0},\bcode{0},\bcode{0}}$, as well as
$\set{2,3} \triangle \set{1,2,4} \triangle \set{1,3,4} = \varnothing$,
where $\triangle$ is the \emph{symmetric difference operator}.

\subsection{General form of central moments on~$\Int_2^n$} \label{Sec:CentralMoments_Z2n}
In this section we list the explicit general form for the \mth\ central moments, $m=2\text{--}6$, of a function $\bs{f}:\Int_2^n\rightarrow\C$. Expanding the dot product~\eqref{Eq:mu_m^a=g*gtg*g} with $a=\mu$, like terms are grouped together to obtain the specific form of~\eqref{ThmEq:mthMoment_mu}. For even~$m$ let $p=m/2$, and for odd $m$ let $p=\gr{m+1}/2$ (other combinations of $m,p$ exist, too). Since~$\bs{f}$ is assumed to be complex-valued, $\mu_2 \neq \sigma^2$, so the second moment is included for completeness.

Notice the terms in each \mth\ moment contain exactly~$m$ Fourier coefficients as dictated by Theorem~\ref{Thm:mthMoment_a}. Also, some summations explicitly impose the index annihilation constraint and some do not --- it is not necessary when all coefficients in a term are raised to an even power because the indices self-annihilate.

\begin{itemize}
\item \textbf{Second central moment}
\begin{equation} \label{Eq:mu2_f_Z2n}
\mu_2\gr{\bs{f}} \;=\;
\Dotprod{\dimin{\hatbsf}{\mu}}{\dimin{\hatbsf}{\mu}}
\;=\; \sum_{j \in \:\! \Int_2^n\backslash0} \hat{f}_j^2
\end{equation}

\item \textbf{Third central moment}
\begin{equation} \label{Eq:mu3_f_Z2n}
\mu_3\gr{\bs{f}} \;=\;
\Dotprod{\dimin{\hatbsf}{\mu}}{\conv^2 \dimin{\hatbsf}{\mu}}
\;=\; \;\; 6\!\!\!\! \sum_{\substack{k \oplus l \oplus m = 0\\[2pt] k<l<m \;\!\in\;\! \Int_2^n\backslash0}} \!\!\!\!\!\hat{f}_k \hat{f}_l \hat{f}_m
\end{equation}

\item \textbf{Fourth central moment}
\begin{eqnarray} \label{Eq:mu4_f_Z2n}
\mu_4\gr{\bs{f}} &=&
\Dotprod{\!\conv^2 \!\dimin{\hatbsf}{\mu}}{\conv^2 \dimin{\hatbsf}{\mu}} \nonumber \\[10pt]
&=&
\sum_{j \in \Int_2^n\backslash0} \hat{f}_j^4 \;\;+\;\; 6 \!\!\!\sum_{l < m \in \Int_2^n\backslash0} \!\!\hat{f}_l^2 \hat{f}_m^2
\;\;+\;\; 24\!\!\!\!\!\!\!\sum_{\substack{l \oplus m \oplus p \oplus q = 0 \\[2pt] l<m<p<q \:\!\in \Int_2^n\backslash0}} \!\!\!\!\!\!\! \hat{f}_l \hat{f}_m \hat{f}_p \hat{f}_q \qquad
\end{eqnarray}

\item \textbf{Fifth central moment}
\begin{eqnarray} \label{Eq:mu5_f_Z2n}
\mu_5\gr{\bs{f}} &=&
\Dotprod{\!\conv^2 \!\dimin{\hatbsf}{\mu}}{\conv^3 \dimin{\hatbsf}{\mu}} \nonumber \\[10pt]
&=&
20\!\!\!\!\sum_{\substack{l \oplus m \oplus p \,=\, 0 \\[2pt] \substack{q\,=\,l,m,p \\[2pt] l<m<p \:\!\in \Int_2^n\backslash0}}} \!\!\!\! \hat{f}_l \hat{f}_m \hat{f}_p \hat{f}_q^2
        \;\;\;+\;\;\;\,
60\!\!\!\!\!\sum_{\substack{l \oplus m \oplus p \,=\, 0 \\[2pt] \substack{q\,\neq\,l,m,p \:\!\in \Int_2^n\backslash0\\[2pt] l<m<p \:\!\in \Int_2^n\backslash0}}} \!\!\!\!\! \hat{f}_l \hat{f}_m \hat{f}_p \hat{f}_q^2 \nonumber \\[10pt]
&&
\qquad\qquad + \;\;\; 120\!\!\!\!\!\sum_{\substack{l \oplus m \oplus p \oplus q \oplus r= 0 \\[2pt] l<m<p<q<r \:\!\in \Int_2^n\backslash0}} \!\!\!\!\! \hat{f}_l \hat{f}_m \hat{f}_p \hat{f}_q \hat{f}_r
\end{eqnarray}

\item \textbf{Sixth central moment}
\begin{eqnarray} \label{Eq:mu6_f_Z2n}
\mu_6\gr{\bs{f}} &=&
\Dotprod{\!\conv^3 \!\dimin{\hatbsf}{\mu}}{\conv^3 \dimin{\hatbsf}{\mu}} \nonumber \\[10pt]
&=&
\sum_{j \in \Int_2^n\backslash0} \!\! \hat{f}_j^6
        \;\;+\;\; 15 \!\!\!\sum_{\substack{p = l,m\\[2pt] l<m \in \Int_2^n\backslash0}} \!\! \hat{f}_l^2 \hat{f}_m^2 \hat{f}_p^2
        \;\;+\;\; 90 \!\!\!\sum_{\substack{p \neq l,m \in \Int_2^n\backslash0\\[2pt] l<m \in \Int_2^n\backslash0}} \!\! \hat{f}_l^2 \hat{f}_m^2 \hat{f}_p^2 \qquad\qquad \nonumber \\[10pt]
&&
+\;\;\; 120\!\!\!\!\!\sum_{\substack{l \oplus m \oplus p \oplus q = 0 \\[2pt] \substack{r\,=\,l,m,p,q \\[2pt] \;l<m<p<q \:\!\in \Int_2^n\backslash0}}} \!\!\!\!\! \hat{f}_l \hat{f}_m \hat{f}_p \hat{f}_q \hat{f}_r^2
\;\;+\;\;
360\!\!\!\!\sum_{\substack{l \oplus m \oplus p \oplus q \,=\, 0 \\[2pt] \substack{r\,\neq\,l,m,p,q \:\!\in \Int_2^n\backslash0\\[2pt] l<m<p<q \:\!\in \Int_2^n\backslash0}}} \!\!\!\! \hat{f}_l \hat{f}_m \hat{f}_p \hat{f}_q \hat{f}_r^2 \qquad\quad \nonumber \\[10pt]
&&
\qquad\qquad + \;\;\; 720\!\!\!\!\!\!\sum_{\substack{l \oplus m \oplus p \oplus q \oplus r \oplus s = 0 \\[2pt] l<m<p<q<r<s \:\!\in \Int_2^n\backslash0}} \!\!\!\!\!\! \hat{f}_l \hat{f}_m \hat{f}_p \hat{f}_q \hat{f}_r \hat{f}_s \qquad\qquad
\end{eqnarray}
\end{itemize}

\begin{remark}
It is curious to note the recursive nature of the cental moments in~{\eqref{Eq:mu2_f_Z2n}--\eqref{Eq:mu6_f_Z2n}}: for $m\geq2$, $\mu_m$ inherits all combinations of the terms from~$\mu_{m-2}$ multiplied by a factor of~$\hat{f}_{q}^2$ for some $q\in\Int_2^n$, as well as the addition of unique linear terms $\hat{f}_{j_1} \hat{f}_{j_2} \cdots \hat{f}_{j_m}$ (i.e., $j_1 \neq j_2 \neq \cdots \neq j_m$).

For example, $\mu_0=1$ (see~\eqref{Eq:mthMoment_base_cases}), so all the terms of $\mu_2$ in~\eqref{Eq:mu2_f_Z2n} are simply $1\cdot\hat{f}_{q}^2 = \hat{f}_{q}^2$. Next, $\mu_1=0$, so only the linear terms $\hat{f}_{j_1} \hat{f}_{j_2} \hat{f}_{j_3}$ remain for~$\mu_3$ in~\eqref{Eq:mu3_f_Z2n}. More interesting are the cental moments for $m\geq4$. For instance, $\mu_2$ has terms of the form $\hat{f}_{p}^2$, so the terms of $\mu_4$ in~\eqref{Eq:mu4_f_Z2n} are $\hat{f}_{p}^2 \cdot \hat{f}_{q}^2$ for both $q=p$ and $q \neq p$, as well as linear terms $\hat{f}_{j_1} \hat{f}_{j_2} \hat{f}_{j_3} \hat{f}_{j_4}$. We encourage the reader to verify that $\mu_5$ in~\eqref{Eq:mu5_f_Z2n} and $\mu_6$ in~\eqref{Eq:mu6_f_Z2n} follow in the same way.
\end{remark}

\subsection{Statistics of binomial distributions from the Fourier transform} \label{Sec:BinomialDist_Stats}
Given that we are dealing with binary variables, it should not come as too much of a surprise that we can derive the statistics of a Bernoulli process~$\bs{f}:\Int_2^n\rightarrow\Real$ from its Fourier transform and without its probability mass function. The statistics for a binomial distribution with probability of success~$p$ and failure~$q$ are well known~\cite{skorski2025handyBinomialFormulas}.
As a reference, the first central moments when $p=q=1/2$ are listed in Table~\ref{Tab:CentralMoments BinomDist}.

\begin{table}[!htb]
\centering
  \caption{Central moments of a binomial distribution when $p=q=\frac{1}{2}$} \label{Tab:CentralMoments BinomDist}
  \vspace{3pt}
  \begin{tabular}{|lrcl|}
  \hline
  & & & \\[-9pt]
  & $\sigma^2$ & $=$ & $\displaystyle npq\big|_{p=q=\frac{1}{2}} \,\;=\;\; \frac{n}{4}$ \\[10pt]
  & $\mu_3$ & $=$ & $\displaystyle npq\gr{q-p}\big|_{p=q=\frac{1}{2}} \,\;=\;\; 0$ \\[7pt]
  & $\mu_4$ & $=$ & $\displaystyle 3\gr{npq}^2 + n\Gr{pq - 6\gr{pq}^2}\big|_{p=q=\frac{1}{2}} \,\;=\;\; \frac{3n^2-2n}{16}$ \\[10pt]
  & $\mu_5$ & $=$ & $\displaystyle \Gr{10\gr{npq}^2 + n\Gr{pq - 12\gr{pq}^2}}\gr{q-p}\big|_{p=q=\frac{1}{2}} \,\;=\;\; 0$ \\[10pt]
  & $\mu_6$ & $=$ & $\displaystyle 15\gr{npq}^3 + n^2\Gr{25\gr{pq}^2 - 130\gr{pq}^3} + n\Gr{120\gr{pq}^3 - 30\gr{pq}^2 + pq}\big|_{p=q=\frac{1}{2}}$\; \\[7pt]
  & & $=$ & $\displaystyle \frac{15n^3 - 30n^2 + 16n}{64}$ \\[9pt]
\hline
\end{tabular}
\end{table}

Our goal is to formulate these statistics using our Fourier domain method. To make the setup apt, we impose the condition that the~$n$ binary variables which affect function~$\bs{f}$ are independent, i.e., there are \emph{no interactions}. If there are no interactions between variables, then the only nonzero coefficients of~$\hatbsf$ are the \emph{direct effects} of each variable. Clearly these effects should all be equal (none are more important than any other), so we set all $\deg$-$1$ coefficients to have the same value $d\in\Real$:
\begin{equation} \label{Eq:DirectEffect_fhat=d}
\qquad
\hat{f}_j \;=\;
\left\{
  \begin{array}{ll}
    d, & \hbox{if $j=2^{\ell-1} = \vect{\,\underbrace{\bcode{0},\ldots,\bcode{0},\bcode{1},\bcode{0},\ldots\bcode{0}}_{\text{only the $\ell$th bit = \bcode{1}}}\,}$, for $\ell=1,2,\ldots,n$;} \\[-3pt]
    0, & \hbox{otherwise.}
  \end{array}
\right.
\end{equation}

For the variance~\eqref{Eq:variance_|fj|^2} (or~\eqref{Eq:mu2_f_Z2n} since~$\bs{f}$ is assumed real), if the only nonzero coefficients of~$\hatbsf$ are the~$n$ direct effects~\eqref{Eq:DirectEffect_fhat=d}, then
\begin{equation} \label{Eq:var_binom}
\sigma^2 \,=\, nd^2.
\end{equation}
Recall the other central moments in Section~\ref{Sec:CentralMoments_Z2n}. Examining the third moment~\eqref{Eq:mu3_f_Z2n}, it is immediately clear that
\begin{equation} \label{Eq:mu3_binom}
\mu_3 \,=\, 0
\end{equation}
because there are no $\hat{f}_k \hat{f}_l \hat{f}_m$ terms that satisfy $k \oplus l \oplus m = 0$ (i.e., the single $\bcode{1}$ bit in each $k < l < m$ index will be in different positions and thus will not annihilate).

For the same reason, there cannot be any $\hat{f}_l \hat{f}_m \hat{f}_p \hat{f}_q$ terms in the fourth moment~\eqref{Eq:mu4_f_Z2n}. So, $\mu_4 = \sum_j \hat{f}_j^4 + 6 \sum_{l<m} \hat{f}_l^2 \hat{f}_m^2 = \AutoGroup{\sum_j \hat{f}_j^2}^2 + 4 \sum_{l<m} \hat{f}_l^2 \hat{f}_m^2$. Substituting~\eqref{Eq:DirectEffect_fhat=d}, we get
\begin{equation} \label{Eq:mu4_binom}
\mu_4 \,=\, \Gr{n d^2}^2 + 4\displaystyle\binom{n}{2}d^2d^2 \,=\, \gr{3n^2 - 2n} \:\! d^4.
\end{equation}

Just like the third moment, the fifth moment~\eqref{Eq:mu5_f_Z2n} is
\begin{equation} \label{Eq:mu5_binom}
\mu_5 \,=\, 0
\end{equation}
because there are no $\hat{f}_l \hat{f}_m \hat{f}_p \hat{f}_q^2$ or $\hat{f}_l \hat{f}_m \hat{f}_p \hat{f}_q \hat{f}_r$ terms that satisfy the annihilation condition.\footnote{This can be generalized beyond $\deg$-$1$ coefficients for odd~$m$: there is no combination of Fourier coefficients of any odd degree such that the indices of $\hat{f}_{j_1} \hat{f}_{j_2} \cdots \hat{f}_{j_m}$ will annihilate. Thus, if a function defined on~$\Int_2^n$ only has odd-degree Fourier coefficients, then all of its odd-order moments will be zero.}

The sixth moment~\eqref{Eq:mu6_f_Z2n} follows in a similar way to the fourth (there are no $\hat{f}_l \hat{f}_m \hat{f}_p \hat{f}_q \hat{f}_r^2$ or $\hat{f}_l \hat{f}_m \hat{f}_p \hat{f}_q \hat{f}_r \hat{f}_s$ terms), albeit with a bit more effort, to yield
\begin{eqnarray} \label{Eq:mu6_binom}
\mu_6 &=& \sum_j \hat{f}_j^6 + 15 \sum_{p=l<m} \hat{f}_l^2 \hat{f}_m^2 \hat{f}_p^2 + 90 \sum_{p\neq l<m} \hat{f}_l^2 \hat{f}_m^2 \hat{f}_p^2 \nonumber \\[2pt]
&=& \GR{\sum_j \hat{f}_j^2}^3 + 12 \sum_{p=l<m} \hat{f}_l^2 \hat{f}_m^2 \hat{f}_p^2 + 84 \sum_{p\neq l<m} \hat{f}_l^2 \hat{f}_m^2 \hat{f}_p^2 \nonumber \\[2pt]
&=& \Gr{nd^2}^3 + 24\binom{n}{2}d^2d^2d^2 + 84\binom{n}{3}d^2d^2d^2 \nonumber \\[5pt]
&=& \gr{15n^3-30n^2+16n} \:\! d^6.
\end{eqnarray}

Comparing the central moments \eqref{Eq:var_binom}--\eqref{Eq:mu6_binom} derived from the Fourier domain with the classically derived ones for a binomial distribution with $p=q=1/2$ in Table~\ref{Tab:CentralMoments BinomDist}, we observe that they are the same if we set $d=1/2$. Further, these central moments can be used to find the skewness, kurtosis, hyperskewness, and hyperkurtosis associated with this function for any direct effect~$d\in\Real$ (not just $d=1/2$); notice in Table~\ref{Tab:StandMoments BinomDist} that the standardized moments do not depend on~$d$.

\begin{table}[!htb]
\centering
\caption{Standardized moments for $n$ indep.~binary factors with equal effect~$d$} \label{Tab:StandMoments BinomDist}
  \vspace{3pt}
  \begin{tabular}{|crlc|}
  \hline
  & & & \\[-10pt]
  & \underline{Skewness}      & $\displaystyle \gamma \,:=\, \frac{\mu_3}{\sigma^3} \,=\, \frac{0}{\gr{nd^2}^{3/2}} \,=\, 0$ & \\[12pt]
  & \underline{Kurtosis}      & $\displaystyle \kappa \,:=\, \frac{\mu_4}{\sigma^4} \,=\, \frac{\gr{3n^2-2n} \:\! d^4}{\gr{nd^2}^{2}} \,=\, 3 - \frac{2}{n}$ & \\[12pt]
  & \underline{Hyperskewness} & $\displaystyle \gamma_5 \,:=\, \frac{\mu_5}{\sigma^5} \,=\, \frac{0}{\gr{nd^2}^{5/2}} \,=\, 0$ & \\[12pt]
  & \underline{Hyperkurtosis} & $\displaystyle \kappa_6 \,:=\, \frac{\mu_6}{\sigma^6} \,=\, \frac{\gr{15n^3-30n^2+16n} \:\! d^6}{\gr{nd^2}^{3}} \,=\, 15 - \frac{30}{n} + \frac{16}{n^2}$ & \\[12pt]
\hline
\end{tabular}
\end{table}

\section{Applications} \label{Sec:Applications}
In this section, we present four examples to highlight the utility of Theorem~\ref{Thm:mthMoment_a}. Section~\ref{Sec:1D_DFT_example} contains a $1$-dimensional DFT both to affirm that a function's statistics can be \emph{calculated} by its Fourier transform, and to demonstrate a non-Gaussian distribution resulting from a single variable. Section~\ref{Sec:Genetics_DFT_example} contains a genetics example, where we \emph{analyze} the skewness of a trait via the hypergraph of interactions due to $n=13$ different amino acid loci. Section~\ref{Sec:Design_DFT_example} shows how to \emph{design} a nonlinear process from~$n$ binary variables. And Section~\ref{Sec:FeasibilityConstraint_DFT_example} provides a sketch employing the annihilation property as a \emph{constraint} for phase retrieval.

\subsection{One-dimensional example: Canonical DFT} \label{Sec:1D_DFT_example}
This first example simply shows how the moments of a function can be computed from its Fourier transform. Consider an arbitrary function $\bs{f}:\Int_{64}\rightarrow\Real$ whose Fourier transform~$\hatbsf$ is sparse; Table~\ref{Tab:DFT_Z64_example_8Fourier_coef} lists such a transform with eight nonzero elements that were randomly generated. Notice that since~$\bs{f}$ is real-valued, its transform is conjugate symmetric: ${\hatbsf}_{\!\ominus} = \overline{\hatbsf\,}$.

\begin{table}[!h]
\centering
  \caption{The eight nonzero coefficients of~$\hatbsf$} \label{Tab:DFT_Z64_example_8Fourier_coef}
  \vspace{5pt}
  \begin{tabular}{c|rccc|r}
      \multicolumn{1}{c}{$j$} & \multicolumn{1}{c}{$\hat{f}_j$} &&&
      \multicolumn{1}{c}{$j$} & \multicolumn{1}{c}{$\hat{f}_j$} \\
      \cline{1-2} \cline{5-6}
      $3$  & $ 1.22 + 0.19\imag$ &&& $54$ & $ 0.12 + 0.96\imag$ \\
      $4$  & $-0.39 - 1.15\imag$ &&& $58$ & $-0.69 + 0.24\imag$ \\
      $6$  & $-0.69 - 0.24\imag$ &&& $60$ & $-0.39 + 1.15\imag$ \\
      $10$ & $ 0.12 - 0.96\imag$ &&& $61$ & $ 1.22 - 0.19\imag$ \\
      \cline{1-2} \cline{5-6}
  \end{tabular}
\end{table}

\begin{figure}[!htb]
\centering
\includegraphics[scale=0.36, trim={160 28 25 -10mm},clip] 
{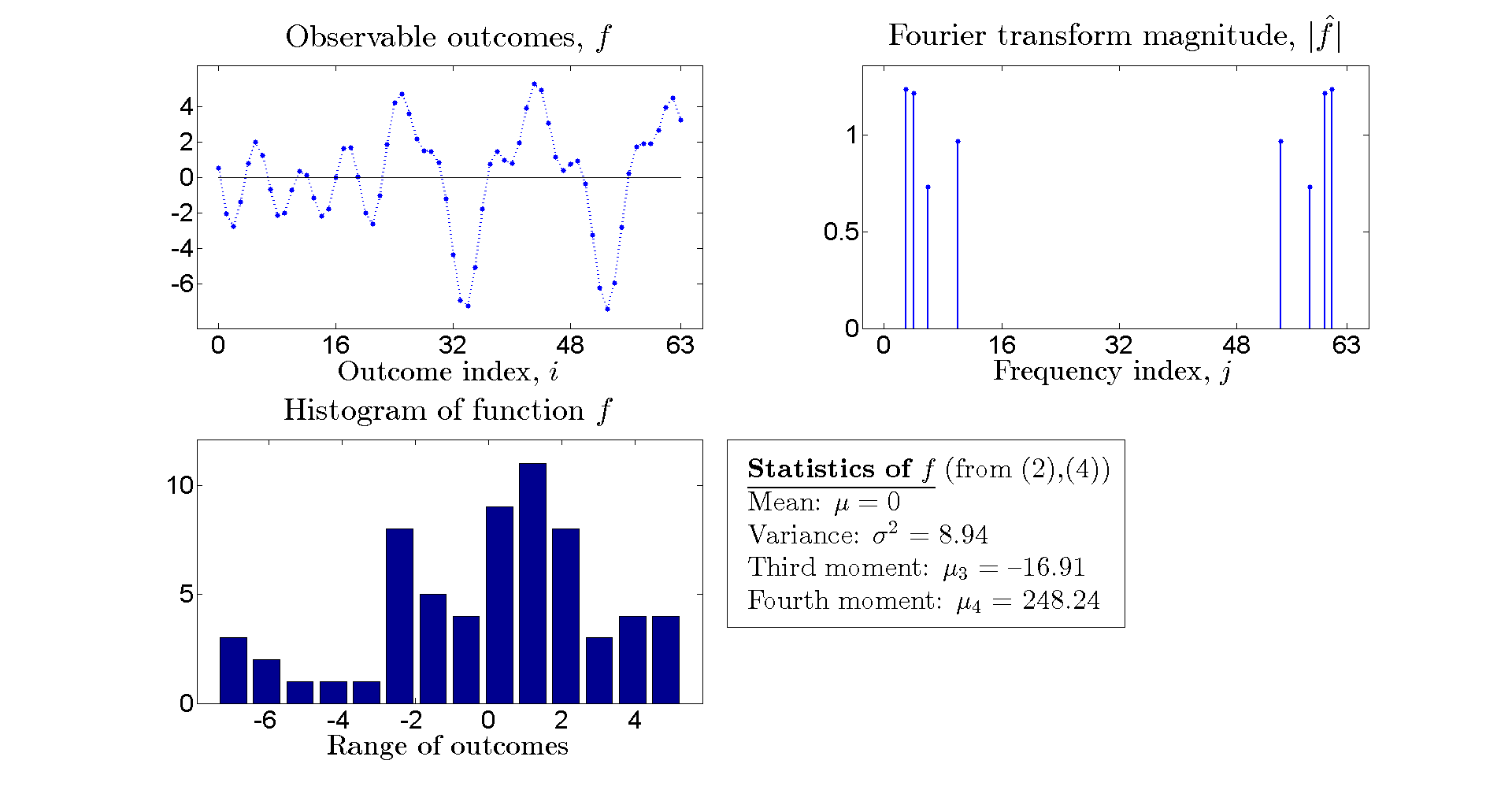}
\caption{A synthetically-generated $1$-dim example defined on $\Int_{64}$. (\textbf{Top}) The function~$\bs{f}$ and its sparse Fourier transform~$\hatbsf$. (\textbf{Bottom}) The histogram of function~$\bs{f}$ and table of its statistics. \label{Fig:DFT_Z64_example}}
\end{figure}

We do not list the $64$ elements of~$\bs{f}$, but they are easily obtained by taking the inverse Fourier transform~\eqref{Def:inverseDFT} of~$\hatbsf$. Figure~\ref{Fig:DFT_Z64_example} shows function~$\bs{f}$, the histogram of its values, and the magnitude of its Fourier transform. Also shown is a table of some summary statistics of~$\bs{f}$ calculated the \emph{traditional way} using~\eqref{Def:mthRawCentralMoments} and~\eqref{Def:Variance}.

Now we confirm that these statistics of~$\bs{f}$ can also be calculated solely from the coefficients of~$\hatbsf$ in Table~\ref{Tab:DFT_Z64_example_8Fourier_coef}. Immediately, we notice that $\hat{f}_0$ is not listed in Table~\ref{Tab:DFT_Z64_example_8Fourier_coef}, so $\hat{f}_0 = 0$, which means $\mu = 0$ from~\eqref{Eq:hat{f}=mu}. Next, the variance from~\eqref{Eq:variance_|fj|^2} is simply
$$
\sigma^2 \;=\; \abs{\hat{f}_3}^2 + \abs{\hat{f}_4}^2 + \abs{\hat{f}_6}^2 + \abs{\hat{f}_{10}}^2 + \abs{\hat{f}_{54}}^2 + \abs{\hat{f}_{58}}^2 + \abs{\hat{f}_{60}}^2 + \abs{\hat{f}_{61}}^2.
$$
Using~\eqref{Eq:mu_m^a=g*gtg*g} with $a=\mu$, the third and fourth central moments of~$\bs{f}$ are
$$
\mu_3 \;=\; 3 \:\!\Gr{\hat{f}_3^2 \hat{f}_{58} + \hat{f}_6 \hat{f}_{61}^2} + 6 \Gr{\hat{f}_4 \hat{f}_6 \hat{f}_{54} + \hat{f}_{10} \hat{f}_{58} \hat{f}_{60}}
$$
and
\begin{eqnarray*}
\mu_4 &=&
6 \Gr{\hat{f}_3^2 \hat{f}_{61}^2 + \hat{f}_4^2 \hat{f}_{60}^2 + \hat{f}_6^2 \hat{f}_{58}^2 + \hat{f}_{10}^2 \hat{f}_{54}^2} +
12 \Gr{\hat{f}_3^2 \hat{f}_4 \hat{f}_{54} + \hat{f}_{10} \hat{f}_{60} \hat{f}_{61}^2} \\[5pt]
&& \quad +\: 24 \GR{\hat{f}_3 \hat{f}_4 \hat{f}_{60} \hat{f}_{61} + \hat{f}_3 \hat{f}_6 \hat{f}_{58} \hat{f}_{61} +  \hat{f}_3 \hat{f}_{10} \hat{f}_{54} \hat{f}_{61} + \hat{f}_4 \hat{f}_6 \hat{f}_{58} \hat{f}_{60} \\[3pt]
&& \qquad\qquad\qquad +\: \hat{f}_4 \hat{f}_{10} \hat{f}_{54} \hat{f}_{60} + \hat{f}_6 \hat{f}_{10} \hat{f}_{54} \hat{f}_{58}}.
\end{eqnarray*}
Substituting the eight Fourier coefficients from Table~\ref{Tab:DFT_Z64_example_8Fourier_coef} into these expressions, we get
$$
\sigma^2 = 8.94, \qquad\quad \mu_3 = -16.91, \qquad\quad \mu_4 = 248.24
$$
which agree with the ``Statistics of~$\bs{f}$'' table in Figure~\ref{Fig:DFT_Z64_example}. The calculations give identical results whether done traditionally or in the Fourier domain, but it is clearly simpler in the latter because~$\hatbsf$ is sparse. Further, just as Theorem~\ref{Thm:mthMoment_a} claims, all of the terms in~$\mu_3$ and~$\mu_4$ contain precisely three and four coefficients, respectively. The reader is encouraged to confirm that the indices in each of these terms add up to ${0\!\pmod{64}}$, as required.

Let us calculate the skewness and kurtosis from the third and fourth moments:
$$
\gamma \,:=\, \frac{\mu_3}{\sigma^3} \,=\, \frac{-16.91}{8.94^{3/2}} \,=\, -0.63, \quad\qquad
\kappa \,:=\, \frac{\mu_4}{\sigma^4} \,=\, \frac{248.24}{8.94^2} \,=\, 3.11.
$$
Compared to the canonical normal distribution ($\gamma_\text{normal} = 0$, $\kappa_{\text{normal}} = 3$), these values are not extreme deviations, but they are also not trivial. This function is composed of just four cosines (recall,~$\hatbsf$ is conjugate symmetric), each with skewness $\gamma_{\text{cos}}=0$ and platykurtosis $\kappa_{\text{cos}}=1.5$. It is notable that their linear combination results in skewness and kurtosis that are markedly different.

\subsection{Multidimensional example: Genetics} \label{Sec:Genetics_DFT_example}
Consider the \emph{Entacmaea quadricolor} anemone sea creature, which has a protein that determines whether it fluoresces red or blue~\cite{doro2021fourierTransQuantTraitCS, LearningPatternEpistasis_Poelwijk2019}. The protein has $n=13$ loci, each occupied by one of two different amino acids, that are the influencing ``genes''; so there are $2^{13}=8192$ genotypes. \emph{Important note:} in this example we list the factors \emph{right to left} as is done in~\cite{doro2021fourierTransQuantTraitCS} (however,~\cite{LearningPatternEpistasis_Poelwijk2019} lists left to right). Thus, the \ith\ genotype is $i=\vect{\bcode{i}_{13},\ldots,\bcode{i}_2,\bcode{i}_1}$, where $\bcode{i}_\ell\in\Int_2$ represents which amino acid is present at locus $\ell$.

Let the trait~$\bs{f}:\Int_2^{13}\rightarrow\Real$ represent the measured brightness of the \emph{Entacmaea quadricolor,} which relates to its color. That is, the \ith\ organism fluoresces with brightness $f_i\geq0$ as a function of the \ith\ genotype. The left side of Figure~\ref{Fig:DFT_Z2n_example} shows the full trait over all possible genotypes and its histogram of outcomes. The data appears to be trimodal with a very strong skew compared to a normal distribution.
\begin{figure}[!htb]
\centering
\includegraphics[scale=0.36, trim={128 27 25 0mm},clip] 
{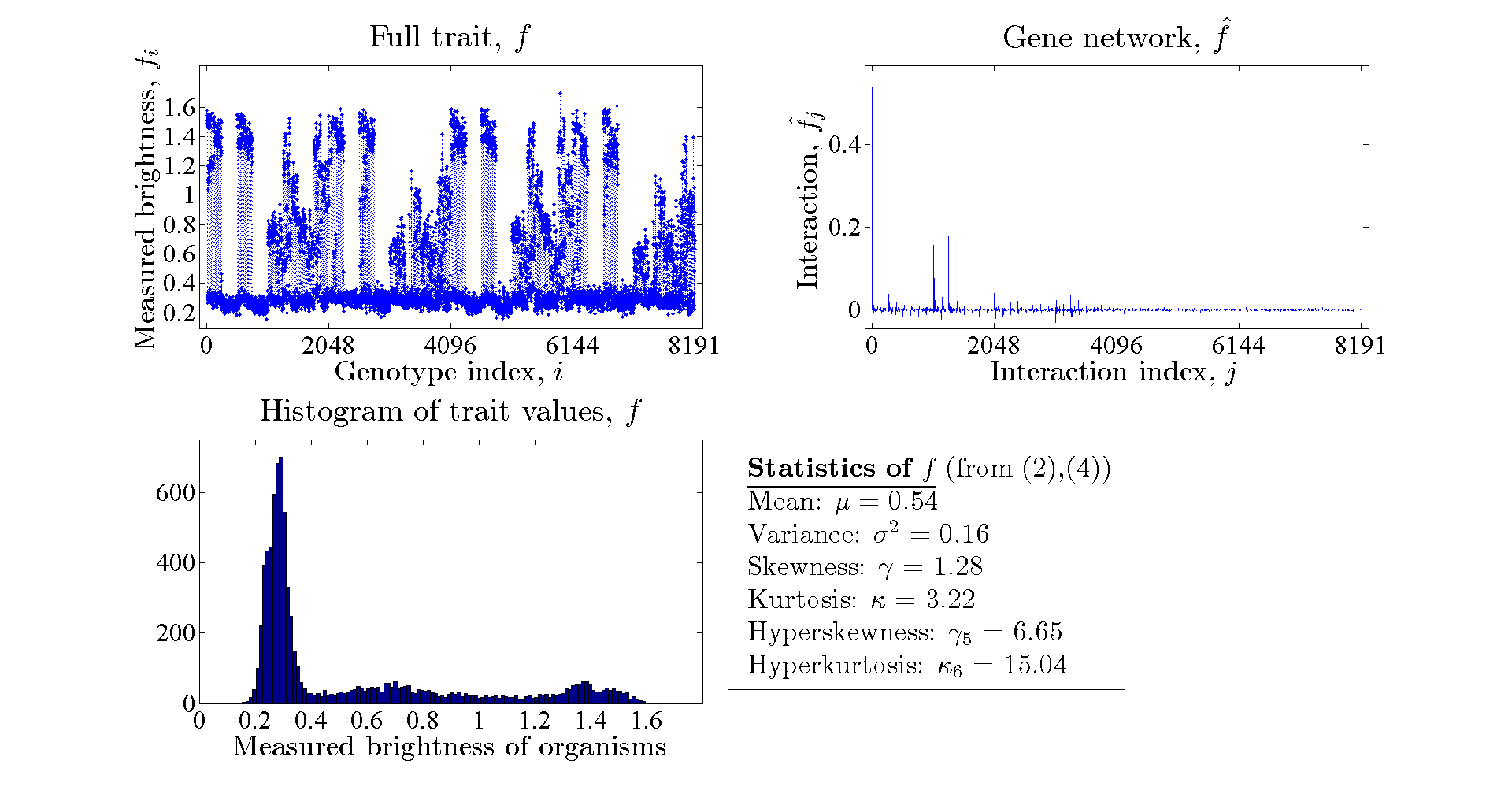}
\caption{(\textbf{Top}) The full trait~$\bs{f}$ and its sparse gene network~$\hatbsf$. (\textbf{Bottom}) The histogram of trait~$\bs{f}$ and table of its statistics. \label{Fig:DFT_Z2n_example}}
\end{figure}

In the context of genetics, the Fourier transform~$\hatbsf$ is sometimes called a ``gene network''~\cite{doro2021fourierTransQuantTraitCS}. The top-right of Figure~\ref{Fig:DFT_Z2n_example} shows that the gene network for this trait is fairly sparse. The top ten genetic interactions are listed in Table~\ref{Tab:Entacmaea_top-10_interactions}. The first three columns are \emph{equivalent representations} of index $j\in\Int_2^{13}$: \emph{decimal, binary, set}. The set notation simply lists which loci are present in a given interaction. Notice how these most important interactions are all low-degree\footnote{Degree is referred to as ``level'' in~\cite{doro2021fourierTransQuantTraitCS}.}. The zeroth interaction is the average trait value $\mu=0.5381$ (see~\eqref{Eq:hat{f}=mu}), which makes sense since the bulk of the trait values are concentrated around $0.3$.

\begin{table}[!hbt]
\centering
\setlength\tabcolsep{2.5pt}
\caption{The top ten coefficients of gene network $\hatbsf$} \label{Tab:Entacmaea_top-10_interactions}
\vspace{5pt}
\begin{tabular}{c|ccccccccccccc|c|c||c}

    \multicolumn{1}{c}{$j$} &
    $\,[\bcode{j}_{13}\!$ & $\bcode{j}_{12}$ & $\bcode{j}_{11}$ & $\bcode{j}_{10}$ & $\bcode{j}_{9\;}$ & $\bcode{j}_{8\;}$ & $\bcode{j}_{7\;}$ & $\bcode{j}_{6\;}$ & $\bcode{j}_{5\;}$ & $\bcode{j}_{4\;}$ & $\bcode{j}_{3\;}$ & $\bcode{j}_{2\;}$ & \multicolumn{1}{c}{$\bcode{j}_1]\,$}
    & \multicolumn{1}{c}{$\text{Loci}_j$} & \multicolumn{1}{c}{$\text{deg}\gr{j}$} & \multicolumn{1}{c}{$\quad\: \hat{f}_j \quad\:$} \\

    \hline

    $0\,$ & $\![\,\bcode{0}$ & $\bcode{0}$ & $\bcode{0}$ & $\bcode{0}$ & $\bcode{0}$ & $\bcode{0}$ & $\bcode{0}$ & $\bcode{0}$ & $\bcode{0}$ & $\bcode{0}$ & $\bcode{0}$ & $\bcode{0}$ & $\,\bcode{0}\,]$ & $\varnothing\,$ & $0$ & $0.5381$ \\

    $264\,$ & $\![\,\bcode{0}$ & $\bcode{0}$ & $\bcode{0}$ & $\bcode{0}$ & $\bcode{1}$ & $\bcode{0}$ & $\bcode{0}$ & $\bcode{0}$ & $\bcode{0}$ & $\bcode{1}$ & $\bcode{0}$ & $\bcode{0}$ & $\,\bcode{0}\,]$ & $\set{4,9}\,$ & $2$ & $0.2396$ \\

    $1280\,$ & $\![\,\bcode{0}$ & $\bcode{0}$ & $\bcode{1}$ & $\bcode{0}$ & $\bcode{1}$ & $\bcode{0}$ & $\bcode{0}$ & $\bcode{0}$ & $\bcode{0}$ & $\bcode{0}$ & $\bcode{0}$ & $\bcode{0}$ & $\,\bcode{0}\,]$ & $\set{9,11}\,$ & $2$ & $0.1778$ \\

    $1032\,$ & $\![\,\bcode{0}$ & $\bcode{0}$ & $\bcode{1}$ & $\bcode{0}$ & $\bcode{0}$ & $\bcode{0}$ & $\bcode{0}$ & $\bcode{0}$ & $\bcode{0}$ & $\bcode{1}$ & $\bcode{0}$ & $\bcode{0}$ & $\,\bcode{0}\,]$ & $\set{4,11}\,$ & $2$ & $0.1565$ \\

    $8\,$ & $\![\,\bcode{0}$ & $\bcode{0}$ & $\bcode{0}$ & $\bcode{0}$ & $\bcode{0}$ & $\bcode{0}$ & $\bcode{0}$ & $\bcode{0}$ & $\bcode{0}$ & $\bcode{1}$ & $\bcode{0}$ & $\bcode{0}$ & $\,\bcode{0}\,]$ & $\set{4}\,$ & $1$ & $0.1019$ \\

    $256\,$ & $\![\,\bcode{0}$ & $\bcode{0}$ & $\bcode{0}$ & $\bcode{0}$ & $\bcode{1}$ & $\bcode{0}$ & $\bcode{0}$ & $\bcode{0}$ & $\bcode{0}$ & $\bcode{0}$ & $\bcode{0}$ & $\bcode{0}$ & $\,\bcode{0}\,]$ & $\set{9}\,$ & $1$ & $0.0934$ \\

    $2048\,$ & $\![\,\bcode{0}$ & $\bcode{1}$ & $\bcode{0}$ & $\bcode{0}$ & $\bcode{0}$ & $\bcode{0}$ & $\bcode{0}$ & $\bcode{0}$ & $\bcode{0}$ & $\bcode{0}$ & $\bcode{0}$ & $\bcode{0}$ & $\,\bcode{0}\,]$ & $\set{12}\,$ & $1$ & $0.0401$ \\

    $280\,$ & $\![\,\bcode{0}$ & $\bcode{0}$ & $\bcode{0}$ & $\bcode{0}$ & $\bcode{1}$ & $\bcode{0}$ & $\bcode{0}$ & $\bcode{0}$ & $\bcode{1}$ & $\bcode{1}$ & $\bcode{0}$ & $\bcode{0}$ & $\,\bcode{0}\,]$ & $\set{4,5,9}\,$ & $3$ & $0.0380$ \\

    $2312\,$ & $\![\,\bcode{0}$ & $\bcode{1}$ & $\bcode{0}$ & $\bcode{0}$ & $\bcode{1}$ & $\bcode{0}$ & $\bcode{0}$ & $\bcode{0}$ & $\bcode{0}$ & $\bcode{1}$ & $\bcode{0}$ & $\bcode{0}$ & $\,\bcode{0}\,]$ & $\set{4,9,12}\,$ & $3$ & $0.0360$ \\

    $\,3328\,$ & $\![\,\bcode{0}$ & $\bcode{1}$ & $\bcode{1}$ & $\bcode{0}$ & $\bcode{1}$ & $\bcode{0}$ & $\bcode{0}$ & $\bcode{0}$ & $\bcode{0}$ & $\bcode{0}$ & $\bcode{0}$ & $\bcode{0}$ & $\,\bcode{0}\,]$ & $\,\set{9,11,12}\,$ & $3$ & $0.0352$
    \\
    \hline
\end{tabular}
\end{table}

Viewing the interactions of Table~\ref{Tab:Entacmaea_top-10_interactions}
as $k$-simplices, we see that all of them, except~$\hat{f}_{280}$ (or~$\hat{f}_{\set{4,5,9}}$), are contained within a tetrahedron with vertices labeled $\set{4,9,11,12}$, as seen in Figure~\ref{Fig:Tetrahedron}. Let us investigate the strongly lopsided distribution of the trait values in Figure~\ref{Fig:DFT_Z2n_example} with respect to this tetrahedron. Using~\eqref{Def:mthRawCentralMoments} and~\eqref{Def:Variance}, the skewness and hyperskewness are $\gamma=1.28$ and $\gamma_5=6.65$, which are quite large compared to the symmetric normal distribution ($\gamma_\text{normal} = 0$, $\gamma_{5,\text{normal}} = 0$). Focusing just on the skewness, the third central moment is $\mu_3 = \gamma\sigma^3 = 0.0814$. From~\eqref{Eq:mu3_f_Z2n}, the top ten interactions in Table~\ref{Tab:Entacmaea_top-10_interactions} contribute the following $3$-coefficient terms to~$\mu_3$:
\begin{eqnarray}
& 6\:\!\Gr{ \hat{f}_{\set{4,9}}\hat{f}_{\set{4,11}}\hat{f}_{\set{9,11}} + \hat{f}_{\set{4}}\hat{f}_{\set{9}}\hat{f}_{\set{4,9}} + \hat{f}_{\set{4,9}}\hat{f}_{\set{12}}\hat{f}_{\set{4,9,12}} & \nonumber \\[3pt]
& +\: \hat{f}_{\set{9,11}}\hat{f}_{\set{12}}\hat{f}_{\set{9,11,12}} + \hat{f}_{\set{4,11}}\hat{f}_{\set{4,9,12}}\hat{f}_{\set{9,11,12}}}. & \label{Eq:First5terms_mu3_geneticsExample}
\end{eqnarray}
Notice, each gene locus appears an even number of times within each term, so
the index annihilation condition for set notation is satisfied (e.g., $\set{4,9} \triangle \set{4,11} \triangle \set{9,11} = \varnothing$). Also notice that coefficient~$\hat{f}_{\set{4,5,9}}$ does not appear in any term since there is no way to annihilate locus~$5$ using the other coefficients in Table~\ref{Tab:Entacmaea_top-10_interactions}.

\begin{figure}[!ht]
\centering
\frame{\includegraphics[scale=0.3, trim={0mm 0mm 0mm 0mm},clip] 
{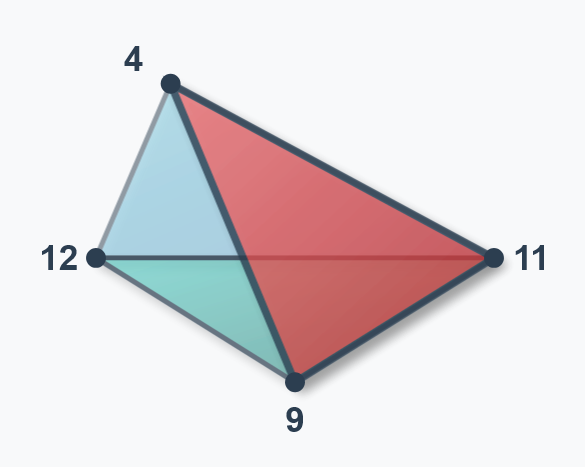}}
\caption{The skew of the trait is primarily affected by the tetrahedron with vertices $\set{4,9,11,12}$. All of the $k$-simplices listed in Table~\ref{Tab:mu3_terms} are part of this tetrahedron. \label{Fig:Tetrahedron}}
\end{figure}

The five terms of~\eqref{Eq:First5terms_mu3_geneticsExample} are listed in the rows of Table~\ref{Tab:mu3_terms}, where the first three columns contain the specific loci of each~$\hat{f}_j$. Substituting the~$\hat{f}_j$ values from Table~\ref{Tab:Entacmaea_top-10_interactions}, the absolute and relative contribution of each term to~$\mu_3$ are listed in the last two columns. The first term consists of the tetrahedron's three edges $\set{4,9}$,$\set{4,11}$,$\set{9,11}$ in Figure~\ref{Fig:Tetrahedron}, and contributes almost $50\%$ to the total skew. The next term is a ``barbell'': two vertices $\set{4}$, $\set{9}$ connected by the edge $\set{4,9}$, constituting about $17\%$ of the skew. The next two terms are composed of a face of the tetrahedron combined with a vertex and an edge opposite to it. Finally, the fifth term consists of two faces $\set{4,9,12}$, $\set{9,11,12}$ connected by the edge $\set{4,11}$, though it only accounts for about $1.5\%$ of the skew.

\begin{table}[!htb]
\centering
  \caption{Contribution of the first five terms of~$\mu_3$ in~\eqref{Eq:First5terms_mu3_geneticsExample}
\label{Tab:mu3_terms}}
  \vspace{5pt}
  \begin{tabular}{c|c|c||c|c}
    \multicolumn{3}{c}{\underline{Loci}} & \multicolumn{1}{c}{\underline{Abs.~contrib.}} & \multicolumn{1}{c}{\underline{Rel.~contrib.}}\\
    \multicolumn{1}{c}{$j_1$} & \multicolumn{1}{c}{$j_2$} & \multicolumn{1}{c}{$j_3$} & \multicolumn{1}{c}{$6 \;\! \hat{f}_{j_1} \hat{f}_{j_2} \hat{f}_{j_3}$} & \multicolumn{1}{c}{$6 \;\! \hat{f}_{j_1} \hat{f}_{j_2} \hat{f}_{j_3}/\mu_3$} \\
    \hline
    $\set{4,9}$ & $\set{4,11}$ & $\set{9,11}$ & $0.0400$ & $0.4910$\\
    $\set{4}$ & $\set{9}$ & $\set{4,9}$ & $0.0137$ & $0.1681$\\
    $\set{4,9}$ & $\set{12}$ & $\set{4,9,12}$ & $0.0021$ & $0.0255$\\
    $\set{9,11}$ & $\set{12}$ & $\set{9,11,12}$ & $0.0015$ & $0.0185$\\
    $\set{4,11}$ & $\set{4,9,12}$ & $\set{9,11,12}$ & $0.0012$ & $0.0146$\\
    \hline
    \multicolumn{2}{c}{} & \multicolumn{1}{r}{Total:} & \multicolumn{1}{c}{$0.0584$} &\multicolumn{1}{c}{$0.7177$} \\
    \end{tabular}\\[10pt]
\end{table}

In summary, by analyzing the trait's third moment in the Fourier domain, we have identified that about $72\%$ of the trait's skew is due to relationships between the genes and their interactions within the tetrahedron in Figure~\ref{Fig:Tetrahedron}. A more thorough analysis of the statistics of this trait and synergies within the gene network can be found in~\cite{herman2026networkSynergiesFourier}.

\subsection{Multidimensional example: Design of a nonlinear process} \label{Sec:Design_DFT_example}
A nonlinear system may have a description which is particularly simple or elegant on the Fourier side. Consider the classic gambling example of summing the payoff from~$n$ fair coin tosses. The sample space for this setup is $G=\Int_2^n$. Section~\ref{Sec:BinomialDist_Stats} discusses how a Bernoulli process, such as this, can be viewed through the Fourier domain. Since each coin flip is independent, the Fourier representation is sparse: a vector of size~$2^n$ where the only nonzero entries are the~$n$ direct effects, so the relative sparsity is $n/2^n$.

For a small, concrete example, let $n=4$ and suppose that heads pays \$$1$ and tails loses \$$1$. This means we set $d=-1$ in~\eqref{Eq:DirectEffect_fhat=d} (if success was tails, then we would have used $d=+1$) so that the Fourier representation is the vector
$$
\hatbsf \,=\, \gr{0,-1,-1,0,-1,0,0,0,-1,0,0,0,0,0,0,0}^\top
$$
with the direct effects at entries $j=1=\vect{\bcode{1},\bcode{0},\bcode{0},\bcode{0}}$, $j=2=\vect{\bcode{0},\bcode{1},\bcode{0},\bcode{0}}$, $j=4=\vect{\bcode{0},\bcode{0},\bcode{1},\bcode{0}}$, and $j=8=\vect{\bcode{0},\bcode{0},\bcode{0},\bcode{1}}$. The inverse Fourier transform~\eqref{Def:inverseDFT} with the Sylvester-Hadamard matrix~$\bs{U}_{\Int_2^4}$ yields the vector of payoffs
\begin{eqnarray*}
\bs{f} &=& \bs{U}_{\Int_2^4}^*\hatbsf \\
&=& \gr{-4, -2, -2, 0, -2, 0, 0, 2, -2, 0, 0, 2, 0, 2, 2, 4}^\top
\end{eqnarray*}
whose histogram is the binomial distribution on the left side of Figure~\ref{Fig:BinaryDist_n4and14}. From Table~\ref{Tab:StandMoments BinomDist} in Section~\ref{Sec:BinomialDist_Stats}, we know that the skewness and kurtosis are $\gamma=0$ and $\kappa = 3-2/4 = 2.5$. Increasing the number of flips to $n = 14$ results in another binomial distribution that is approaching Gaussianity, seen on the right side, with $\gamma=0$ and $\kappa = 3-2/14 = 2.86$.

\begin{figure}[!tb]
\centering
\includegraphics[scale=0.5, trim={20 2 10 0mm},clip] 
{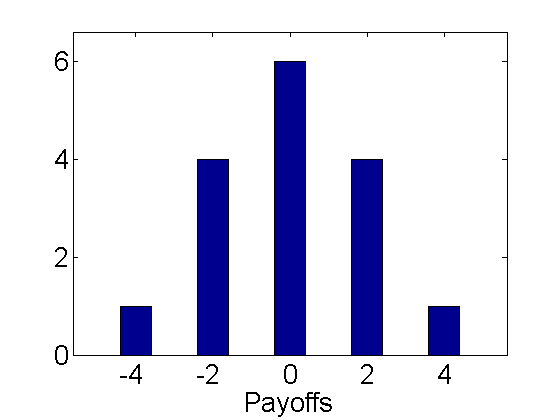}
\quad
\includegraphics[scale=0.5, trim={5 2 10 0mm},clip] 
{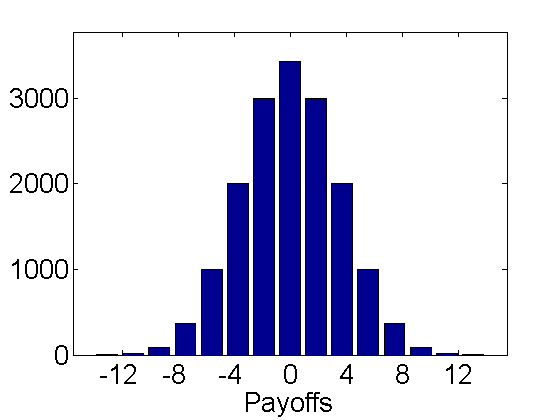}
\vspace{-5pt}
\caption{Binomial distributions from $n$ fair coin flips. \textbf{(Left)} For $n=4$: $\gamma=0$, $\kappa=2.5$; \textbf{(Right)} For $n=14$: $\gamma=0$, $\kappa=2.86$. \label{Fig:BinaryDist_n4and14}}
\end{figure}

The game may be enlivened by small side bets, which can be represented by Fourier entries of $\deg$-$2$ and higher. These side bets (i.e., interactions) introduce nonlinearities in the payoff computations and distort the distribution of payoffs. A simple example is to bet on every combination of pairs of coin flips, losing an amount~$a$ for any two that match, and gaining~$a$ for any two that do not match. This is encoded by setting $\hat{f}_j=-a$ at every index of $\deg$-$2$: $j=3=\vect{\bcode{1},\bcode{1},\bcode{0},\bcode{0}}$, $j=5=\vect{\bcode{1},\bcode{0},\bcode{1},\bcode{0}}$, $j=6=\vect{\bcode{0},\bcode{1},\bcode{1},\bcode{0}}$, $j=9=\vect{\bcode{1},\bcode{0},\bcode{0},\bcode{1}}$, $j=10=\vect{\bcode{0},\bcode{1},\bcode{0},\bcode{1}}$, and $j=12=\vect{\bcode{0},\bcode{0},\bcode{1},\bcode{1}}$, so now
$$
\hatbsf \,=\, \gr{0,-1,-1,-a,-1,-a,-a,0,-1,-a,-a,0,-a,0,0,0}^\top.
$$
Set~$a$ at $10$ cents, for instance. The vector of payoffs from the inverse Fourier transform now becomes
$$
\bs{f} \,=\, \gr{-4.6, -2.0, -2.0, \:\!0.2, -2.0, \:\!0.2, \:\!0.2, \:\!2.0, -2.0, \:\!0.2, \:\!0.2, \:\!2.0, \:\!0.2, \:\!2.0, \:\!2.0, 3.4}^\top
$$
whose histogram is shown on the left side of Figure~\ref{Fig:BinaryDist_n4and14_a10cents}. Compared with the binomial distribution in Figure~\ref{Fig:BinaryDist_n4and14} the nonlinear interactions have caused skewness of $\gamma = -0.44$ to appear and kurtosis to increase to $\kappa = 2.69$. As the number of coin tosses increases, so does the toll extracted by the side bets. At $n = 14$, still with $a = 0.1$, we see on the right side of Figure~\ref{Fig:BinaryDist_n4and14_a10cents} marked skewness of $\gamma = -0.99$ and kurtosis of $\kappa = 4.09$.

\begin{figure}[!t]
\centering
\includegraphics[scale=0.5, trim={20 1 10 0mm},clip] 
{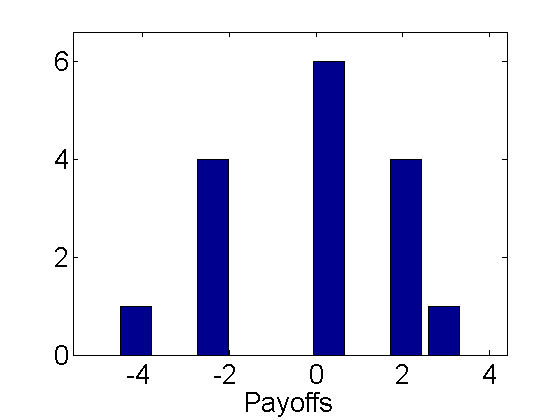}
\quad
\includegraphics[scale=0.5, trim={5 1 10 0mm},clip] 
{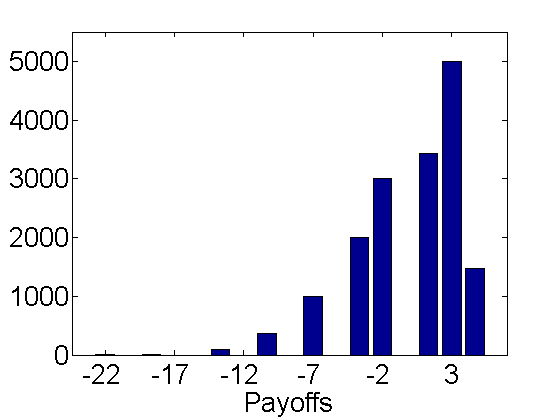}
\vspace{-5pt}
\caption{The histograms resulting from a small side bet of $a=0.1$. Nonlinear interactions cause distortions from the binomial distributions in Fig.~\ref{Fig:BinaryDist_n4and14}. \textbf{(Left)} For $n=4$ coin flips: $\gamma = -0.44$, $\kappa = 2.69$; \textbf{(Right)} For $n=14$ coin flips: $\gamma = -0.99$, $\kappa = 4.09$. \label{Fig:BinaryDist_n4and14_a10cents}}
\end{figure}

This example may be generalized. We may take the index loci as vertices of a graph and include an entry~$-a$ (with $a>0$) for a graph edge, thus creating a variety of nonlinear distributions. Theorem~\ref{Thm:mthMoment_a} permits facile computation of moments of the corresponding distributions, by geometric combinatorics. For example, out of mathematical curiosity, we have computed the moments of the Petersen graph, an unusual list of graph invariants.

Returning to the complete graph introduced above in a gambling context, the second moment for the complete graph on~$n$ vertices, each worth, say,~$d = \pm1$ units, and $n\gr{n-1}/2$ edges each worth~$-a$ units results in
$$
\sigma^2 =\, nd^2 + {n\gr{n-1}} \gr{-a}^2/2 \,=\, n + {n\gr{n-1}} a^2/2.
$$

Now consider the third moment,~$\mu_3$. The index annihilation constraint shows that only two configurations are possible: a ``barbell'' consisting of two vertices connected by an edge, and a triangle consisting of three edges. A barbell occurs for every edge~$uv$ with  endpoints~$u$ and~$v$ (there are $n\gr{n-1}/2$ such), contributing $\gr{-a}d^2 = -a$ for six orderings of the constituents. The overall contribution is $3n\gr{n-1}\gr{-a}$. There are $n\gr{n-1}\gr{n-2}/6$ triangles $uvw$ and the three edges come in six sequences. The overall contribution is $n\gr{n-1}\gr{n-2}\gr{-a}^3$. There are no viable index cancellations of higher order, so
$$
\mu_3 \,=\, -3n\gr{n-1}a - n\gr{n-1}\gr{n-2}a^3.
$$

By similar geometric constructions, one shows
$$
\mu_4 \,=\, n\gr{3n-2} + 3n\gr{5n^2 - 13n + 8}a^2 + n\gr{15n^3 - 78n^2 + 131n - 68}a^4/4.
$$
These formulae are specific versions of~\eqref{Eq:variance_|fj|^2}, \eqref{Eq:mu2_f_Z2n}--\eqref{Eq:mu4_f_Z2n} and demonstrate astonishing condensation of the awkward sums of powers of linear expressions which arise in the direct computation of moments in~\eqref{Def:mthRawCentralMoments} and~\eqref{Def:Variance}. Notice, setting parameter~$a=0$ in these expressions we obtain \eqref{Eq:var_binom}--\eqref{Eq:mu4_binom} with $d=\pm1$; hence increasing~$a$ from zero, they illustrate how including pairwise interactions permits a smooth transition from just linear effects to more realistic nonlinear systems.

These distributions corresponding to complete graphs are prototypes of distributions which deviate from normality because pairwise interactions have been imposed. The effect has been to induce skew and to increase kurtosis (at least initially, as~$a$ increases from zero).

\subsection{Feasibility constraint} \label{Sec:FeasibilityConstraint_DFT_example}
We now illustrate how Theorem~\ref{Thm:mthMoment_a} can be used as a feasibility constraint. Assume function $\bs{f}:G\rightarrow\C$ and $\bs{f} \! \stackrel{\mathcal{F}}\rightleftharpoons \! \hatbsf$ are a Fourier pair, but~$\bs{f}$ is unknown and~$\hatbsf$ is somehow incomplete. If, say,~$\mu_3$ and~$\mu_4$ (or equivalently~$\sigma^2,\gamma,\kappa$) of~$\bs{f}$ are known or assumed, then using~\eqref{ThmEq:mthMoment_mu} we could require
$$
\mu_3 \;= \sum_{\substack{k,l,m\:\!\in\:\!G\backslash0\\[2pt] k \oplus l \oplus m = 0}}
    \hat{f}_k \hat{f}_l \hat{f}_m
\;\;\qquad\text{and}\qquad\;\;
\mu_4 \;= \sum_{\substack{k,l,m,p\:\!\in\:\!G\backslash0\\[2pt] k \oplus l \oplus m \oplus p = 0}}
    \hat{f}_k \hat{f}_l \hat{f}_m \hat{f}_p
$$
in a search or optimization algorithm. The key is:
\begin{enumerate}
\item[(i)] we have closed-form expressions to evaluate higher-order statistics using whatever information may be available, simultaneously, on both sides of the Fourier transform,
\item[(ii)] the Fourier coefficients allowed in each term must satisfy the annihilation condition, thereby limiting the domain of possible solutions.
\end{enumerate}
Note, it can be advantageous to group like terms together (as is done in Section~\ref{Sec:CentralMoments_Z2n}).

\begin{remark} \label{Rem:Normal_constraint}
Typically, higher-order statistics are used when a function's distribution is non-normal. But our tools also apply to normal distributions when used as a constraint. For a normal distribution, the skewness $\gamma_\text{normal} = 0$ and kurtosis $\kappa_\text{normal} = 3$, so the Fourier coefficients would have to satisfy $0 = \sum \hat{f}_k \hat{f}_l \hat{f}_m$ and $3\sigma^4 = \sum \hat{f}_k \hat{f}_l \hat{f}_m \hat{f}_p$.
\end{remark}

\subsubsection{X-ray crystallography application}
Consider an $N_1\text{-by-}N_2$ image~$\bs{f}$ defined on the group $G=\Int_{N_1}\!\times\Int_{N_2}$. Assume the $\abs{G}=N_1N_2$ pixels have been  vectorized to fit our model~\eqref{Def:forwardDFT}; the associated DFT matrix is $\bs{U}_{\:\!\!G} = \bs{U}_{\Int_{N_1}} \!\otimes \bs{U}_{\Int_{N_2}}$. For example, in X-ray crystallography we want to learn an image~$\bs{f}$ from just its Fourier magnitude information, $\abs{\hat{f}_j}$ for all $j\in G$. The goal of recovering the correct phase for each~$\hat{f}_j$ has a long history in inverse problems~\cite{wolf2011historyXrayPhaseProblem}.

Let $\hat{g}_j \,=\, \abs{\hat{f}_j} \:\! \e^{\imag\phi_j}$, where~$\phi_j\in\vect{0,2\pi}$ is a possible phase angle for the \jth\ Fourier coefficient. The candidate Fourier transform~$\hat{\bs{g}} = \Gr{\hat{g}_j}_{j\in G}$ yields an image~$\bs{g} \stackrel{\mathcal{F}}\rightleftharpoons \hat{\bs{g}}$ that satisfies $\abs{\hat{g}_j} = \abs{\hat{f}_j}$ for all $j\in G$.

Even though we do not know image~$\bs{f}$, suppose we have \emph{a priori} information (perhaps from an earlier step in an iterative algorithm) on the third central moment of its pixels,~$\mu_3$. Then in addition to whatever techniques are typically used to recover $\Gr{\phi_j}_{j\in G}$, we can add the constraint
\begin{eqnarray*}
\mu_3 &=&
    \sum_{\substack{k,l,m\:\!\in\:\!G\backslash0\\[2pt] k \oplus l \oplus m = 0}}
   \hat{g}_k \:\! \hat{g}_l \:\! \hat{g}_m \\[5pt]
&=&
    \sum_{\substack{k,l,m\:\!\in\:\!G\backslash0\\[2pt] k \oplus l \oplus m = 0}} \abs{\hat{f}_k} \:\! \abs{\hat{f}_l} \:\! \abs{\hat{f}_m} \, \e^{\imag\gr{\phi_k + \phi_l + \phi_m}}.
\end{eqnarray*}
Crucially, the coefficients allowed in each term must satisfy the {$k \oplus l \oplus m = 0$} condition --- this extra constraint will filter out \emph{unfeasible phase combinations.} It is interesting to reshape vector~$\hat{\bs{g}}$ on an $N_1\text{-by-}N_2$ grid as a $2$-dimensional Fourier transform to see any patterns in the locations of the coefficients in each valid triple, $\hat{g}_k \:\! \hat{g}_l \:\! \hat{g}_m$. If desired, the same process can be carried out for other high-order moments.

\section{Conclusion and future directions} \label{Sec:Conclusion}
Artificial intelligence (AI) is making it increasingly common for us to deal with functions where the relation of input to output is not exactly known. Functions that depend on the inputs of multiple factors, each of which is restricted to a finite handful of alternates, are a salient case. If each such factor can be treated as a cycle of alternates, such a function is quite analogous to a time series, but one composed of multiple individual cycles or factors.

The simplest version of multi-factor functions happens when there are only two alternatives to each factor, the case of Boolean or pseudo-Boolean functions. Thus, in choice theory, each factor can vote \bcode{Yes} or \bcode{No} and the function amalgamates these into an overall decision. In the theory of markets, each participant can be inclined to \bcode{Buy} or \bcode{Sell}, resulting in an overall demand. In genetics, each locus may contribute more or less to a particular phenotypic trait.

The normal distribution can be an admirable device for predicting the net effect of a large number of individual inputs on some variable of interest. This distribution is justified when the function combining these inputs is linear, and it is quite a useful approximation even in the face of significant nonlinearities in the unknown amalgamating function.

Thus, in biological applications, some have treated the genome-phenotype correspondence as largely linear: \emph{``One might say that to first approximation, Biology = linear combinations of nonlinear gadgets, and most of the variation between individuals in a species is due to the (linear) way gadgets are combined, rather than in the realization of different gadgets in different individuals.''}~\cite{raben2022polygenicPrediction}
In contrast, our approach recognizes that even small epistatic interactions may be so numerous as to alter the overall shape of a trait's distribution.

The successes of AI should not lead us to lose all awareness that the function connecting inputs to outcomes is not fully known. Here, the Fourier viewpoint is of great conceptual value, since it clearly distinguishes linearity from a hierarchy of nonlinear effects. \emph{Ab initio,} researchers on complex biological traits have spoken of ``direct gene effects'' or of ``epistatic interactions" without explicit recognition that these can be cast as concepts from the Fourier side.

Fourier methods are well known to help analyze statistical properties of time series, to detect and highlight recurrent features, but the extension to multi-factor functions is less universally adopted. A Fourier approach sheds light on otherwise mysterious phenomena.

An analogue of Theorem~\ref{Thm:mthMoment_a} is possible for multidimensional cumulants, writing each cumulant as a sum of products with annihilating indices. This approach melds well with Gian-Carlo Rota's work~\cite{rota2000combinatoricsCumulants}, treating random variables as umbrae~\cite{rota2001twelveProblemsProbability}. Umbrae are equipped with an \emph{evaluation} operator~$E$, where~$E\gr{\alpha}$ is the expectation of~$\alpha$, when the umbra~$\alpha$ is a random variable. Rota shows the existence of cumulants where the polynomials defining cumulants from moments eliminate products with disjoint supports, ensuring additivity of independent cumulants.

Skewness and leptokurtosis are ubiquitous in empirical data distributions~\cite{taleb2023statistical}. Our Theorem~\ref{Thm:mthMoment_a} may provide a clue to this phenomenon.  In an appropriate setup, a negative interaction may signal a diminishing return relative to the sum of direct effects. When the interaction coefficients are predominantly negative, then right-sided skewness increases, since products of three negative contributions will all be aligned. A similar alignment will tend to increase kurtosis, explaining the many ``fat tails'' being discovered empirically~\cite{doro2026submodularityFourier}.

Persistent deviations from the normal distribution may be sentinels, signaling the presence of hidden nonlinearities in systems which depend on a large set of discrete inputs.

\section{Acknowledgements}
MAH thanks Jassem Shahrani and Zubin Gautam, and SD thanks Stanley Chang for many interesting discussions.

\appendix
\section{Time series \mth-order moment due to $n$ factors} \label{App:TimeSeries_mthOrderMoment_nDim}
Incorporating multidimensional Fourier transforms into the statistical techniques of time series is straightforward, although it demands extra attention to multidimensional indices. As before, let function~$\bs{f} = \gr{f_0,f_1,\ldots,f_{\abs{G}-1}}^\top$ be defined on finite abelian group~$G$~\eqref{Eq:FiniteAbelianGroup_G}, where the \ith\ element, $f_i$, is a function of $n$ factors (each of possibly different size) via the $n$-dimensional index~$i\in G$~\eqref{Eq:i_n-tuple}.

For example, let~$r_m^\bs{f}$ be the \emph{\mth-order moment of~$\bs{f}$ in one dimension} based on $m-1$ time differences or lags, $\set{i_q}_{q=1}^{m-1}$ with $i_q\in\Int_N$; it is defined in~\cite[Eq.~(3.95)]{nikias1993higher} as
\begin{equation} \label{Def:mth-order_moment-timeSeries}
r_m^\bs{f}\gr{i_1,\ldots,i_{m-1}} \,:=\, \frac{1}{\abs{G}} \sum_{i\in G} f_i \:\! f_{i\oplus i_1} \cdots f_{i\oplus i_{m-1}}.
\end{equation}
Here, $G=\Int_N$. Extending the domain to finite abelian~$G$~\eqref{Eq:FiniteAbelianGroup_G}, i.e., from~$1$ to $n$ factors, can be achieved simply by treating the indices as $n$-tuples, where the \qth\ lag is now $i_q = \vect{\bcode{i}_{q,1},\bcode{i}_{q,2},\ldots,\bcode{i}_{q,n}} \in G$. In this way,~\eqref{Def:mth-order_moment-timeSeries} is the \emph{\mth-order moment of~$\bs{f}$ in~$n$ dimensions}.

Function~$\bs{f}$ has a Fourier transform $\hatbsf \,=\, \gr{\hat{f}_0,\hat{f}_1,\hat{f}_2,\ldots,\hat{f}_{\abs{G}-1}}^\top$ defined in~\eqref{Def:forwardDFT}. As such, the \ith\ element lagged by $i_q$ can be expressed as
$$
f_{i\oplus i_q} = \sum_{j_q\in G} {\hat{f}_{j_q} \prod_{\ell=1}^n \omega_{N_\ell}^{\gr{\bcode{i}_\ell \oplus \bcode{i}_{q,\ell}}\bcode{j}_{q,\ell}}}
$$
where we utilized~\eqref{Eq:U_G(i,j)} along with $j_q = \vect{\bcode{j}_{q,1},\bcode{j}_{q,2},\ldots,\bcode{j}_{q,n}} \in G$.
Following the steps in~\cite[\S3.5.2]{nikias1993higher}, substituting this into~\eqref{Def:mth-order_moment-timeSeries} for each $f_{i\oplus i_q}$, $q=1,2,\ldots,m-1$, ultimately yields
\begin{center}
\fbox{
\addtolength{\linewidth}{-17\fboxsep}
 \begin{minipage}{\linewidth}
\begin{equation} \label{Eq:Nikias_mth-order_moment}
r_m^\bs{f}\gr{i_1,\ldots,i_{m-1}} \;=\; \sum_{j_1\in G}\cdots\!\!\sum_{j_{m-1}\in G} \hat{f}_{j_1}\cdots\hat{f}_{j_{m-1}}\hat{f}_{j_m} \cdot \prod_{\ell=1}^n \omega_{N_\ell}^{\oplus_{q=1}^{m-1} \bcode{i}_{q,\ell}\:\!\bcode{j}_{q,\ell}} \quad
\end{equation}
    \vspace{1pt}
 \end{minipage}
}\\[15pt]
\end{center}
where
$$
\hat{f}_{j_m} \,=\, \frac{1}{\abs{G}}\sum_{i\in G} {f_i \:\!\prod_{\ell=1}^n \omega_{N_\ell}^{\ominus\bcode{i}_\ell \bcode{j}_{m,\ell}}}
$$
is element~$j_m = \vect{\bcode{j}_{m,1},\bcode{j}_{m,2},\ldots,\bcode{j}_{m,n}} \in G$ of the Fourier transform of~$\bs{f}$ with
\begin{equation} \label{Eq:j_m_resonance}
\qquad\qquad \bcode{j}_{m,\ell} \,=\, {\ominus_{q=1}^{m-1} \bcode{j}_{q,\ell}}, \qquad \text{for } \ell=1,2,\ldots,n
\end{equation}
i.e., index~$j_m$ is uniquely determined by the other $j_1,j_2,\ldots, j_{m-1}$.

Eq.~\eqref{Eq:Nikias_mth-order_moment} is essentially the $n$-dimensional version of~\cite[Eq.~(3.117)]{nikias1993higher}. Due to~\eqref{Eq:j_m_resonance}, the sum of the~$m$ frequency indices $\set{j_q}_{q=1}^m$ obeys the closure property
\begin{equation} \label{Eq:Resonance_condition}
j_1 \oplus j_2 \oplus \cdots \oplus j_{m-1} \oplus j_m \,=\, 0
\end{equation}
which is often called the ``resonance condition'' in wave phenomena, e.g., see~\cite[Eq.~(2)]{zakharov1975resonantInteractionNonlinearMedia}.

In this paper we are only concerned with \mth-order autocorrelations when all lags $i_q = \vect{\bcode{0},\bcode{0},\ldots,\bcode{0}} = 0$, so~\eqref{Eq:Nikias_mth-order_moment} simplifies to
\begin{equation} \label{Eq:Nikias_mth-order_moment-zeroLags}
r_m^\bs{f}\gr{0,\ldots,0} \,=\, \sum_{j_1\in G}\cdots\!\!\sum_{j_{m-1}\in G}
\hat{f}_{j_1}\cdots\hat{f}_{j_{m-1}}\hat{f}_{j_m}.
\end{equation}
This, taken with~\eqref{Eq:Resonance_condition}, is the special case of the \mth\ raw moment, $\mu'_m\gr{\bs{f}}$, in our Theorem~\ref{Thm:mthMoment_a}. However, our results also apply to complex-valued~$\bs{f}$ centered about an arbitrary point $a\in\C$. Moreover, for each $m$-coefficient term, $\hat{f}_{j_1}\cdots\hat{f}_{j_{m-1}}\hat{f}_{j_m}$, (there are~$\abs{G}^{m-1}$ such) the $n$-tuple index~$j_m$ in~\eqref{Eq:j_m_resonance} needs to be explicitly computed; yet our approach using autoconvolutions of~$\hatbsf$ in~\eqref{Eq:mu_m^a=g*gtg*g} (via powers of circulant matrices~\eqref{Eq:C_*^mf=C_f^m}) automatically generates all of the $m$-coefficient terms in~\eqref{Eq:Nikias_mth-order_moment-zeroLags} that satisfy~\eqref{Eq:Resonance_condition} without using any combinatorics (see Section~\ref{Sec:ComputationalComplexity}).

\bibliography{ms}
\bibliographystyle{IEEEtran}

\end{document}